\def\els@aparagraph[#1]#2{\elsparagraph[#1]{#2\@addpunct{.}}}
\def\els@bparagraph#1{\elsparagraph*{#1\@addpunct{.}}}
\DeclareMathOperator{\GHZ}{GHZ}
\begin{document}
		\title{A generating problem for subfactors}
	\author{Yunxiang Ren} 
	\address{Department of Mathematics and Department of Physics, Harvard University}
	\email{\href{yren@g.harvard.edu}{yren@g.harvard.edu}}
	\maketitle
	
	\newtheorem{Lemma}{Lemma}
	\theoremstyle{plain}
	\newtheorem{theorem}{Theorem~}[section]
	\newtheorem*{main}{Main Theorem}
	\newtheorem*{utheorem}{Theorem}
	\newtheorem{lemma}[theorem]{Lemma~}
	\newtheorem{question}[theorem]{Question~}
	\newtheorem{proposition}[theorem]{Proposition~}
	\newtheorem{corollary}[theorem]{Corollary~}
	\newtheorem{definition}[theorem]{Definition~}
	\newtheorem{notation}[theorem]{Notation~}
	\newtheorem{example}[theorem]{Example~}
	\newtheorem*{remark}{Remark}
	\newtheorem{cor}[theorem]{Corollary~}
	\newtheorem*{claim}{Claim}
	\newtheorem*{conjecture}{Conjecture~}
	\newtheorem*{fact}{Fact~}
	\renewcommand{\proofname}{\bf Proof}
	\newcommand{\FS}{\mathfrak{F}}
	\newcommand{\bZ}{\mathbb{Z}}
	\newcommand{\bR}{\mathbb{R}}
	\newcommand{\bN}{\mathbb{N}}
	\newcommand{\bC}{\mathbb{C}}
	\newcommand{\fA}{\mathfrak{A}}
	\newcommand{\fB}{\mathfrak{B}}
	\newcommand{\fr}{r_\mathfrak{A}}
	\newcommand{\cN}{\mathcal{N}}
	\newcommand{\cM}{\mathcal{M}}
	\newcommand{\cH }{\mathcal{H}}
	\newcommand{\cC}{\mathcal{C}}
	\newcommand{\cR}{\mathcal{R}}
	\newcommand{\cS}{\mathcal{S}}
	\newcommand{\cA}{\mathcal{A}}
	\newcommand{\cB}{\mathcal{B}}
	\newcommand{\sC}{\mathscr{C}}
	\newcommand{\sP}{\mathscr{P}}
	\newcommand{\fF}{\mathfrak{F}}
	\newcommand{\kG}[1]{\mathscr{P}^{KG_{#1,2}}}
	\newcommand{\norm}[1]{\| #1\|}
	\renewcommand{\geq}{\geqslant}
	\renewcommand{\leq}{\leqslant}

\begin{abstract}
	Bisch and Jones proposed the classification of planar algebras by simple generators and relations. In this paper, we study the generating problem for a family of group-subgroup subfactors associated with the Kneser graphs, namely, to determine the generators with minimal size. In particular, we prove that this family of subfactors are generated by $2$-boxes and this provides an affirmative answer to a question of Vaughan Jones. This generator problem is also related to the theory of quantum permutation groups, and the main theorem also provides an infinite family of strongly regular graphs with no quantum symmetry.    
\end{abstract}

\section{Introduction}\label{sec: intro}
Vaughan Jones initialled modern subfactor theory by his remark index theorem \cite{Jon83}. Since then, there are many different formalisms to understand the central object, namely, the \textit{standard invariants} for subfactors \cite{Bis97, Pop95, Ocn88}. Later on, Vaughan Jones introduced the subfactor planar algebras as a topological axiomatization of standard invariants \cite{Jon99}. A planar algebra $\sP_\bullet$ consists of a sequence of finite-dimensional $C^*$-algebras $\sP_{m,\pm}$ (which are called the $m$-box spaces) and a natural action of the operad of planar tangles. This perspective displays that the \textit{standard invariants} is a representation of \textit{fully labeled planar tangles} in the flavor of \textit{topological quantum field theory} \cite{Ati88}. 

From the perspective of planar algebras, Bisch and Jones proposed the classification of subfactors by \textit{simple generators and relations} \cite{BisJon00, BisJon03, BJL17}. The motivating examples are the Birman-Murakami-Wenzl algebras \cite{BirWen89, Mur90}, which admits the Yang-Baxter relations. Such Yang-Baxter relation planar algebras are completely classified in \cite{LiuYB}, and a new family of subfactor planar algebras were discovered there which has a deep connection to conformal field theory. For Yang-Baxter relations, the critical dimension of the $3$-box space is 15: the dimension of the $3$-box space of singly-generated Yang-Baxter relation planar algebras is less than or equal to $15$. Moreover, when the dimension is less than 15, Yang-Baxter relations will always hold. However, when the dimension is 15, Yang-Baxter relations are not automatic anymore. Therefore, do there exists singly-generated planar algebras beyond Yang-Baxter relations? In particular, Jones asked whether the subfactor planar algebra for $S_2\times S_3\subset S_5$ is generated by its $2$-boxes in the late nineties. In \cite{RenThesis, Ren19Uni}, we provide an affirmative answer to this question and a skein theory from the perspective of group-action models. Later on, Jones asked the following question.
\begin{question}[Jones, 2017]\label{quest: VJ}
	Are the subfactor planar algebras for $S_2\times S_{n-2}< S_n$ generated by their $2$-boxes? 
\end{question} 
This question is also closely related to the classification of spin models for Kauffman polynomial from self-dual strongly regular graphs by Jaeger \cite{Jae92}. In particular, he discovered a new spin model based on the Higman-Sims graph \cite{HigSims68}. The spin model is described by a \textit{spin model planar algebra}, and the adjacency matrix is a $2$-box. Question \ref{quest: VJ} can be asked in this general setup: given a strongly regular graph $\Gamma$, the associated group-action model $\sP_\bullet^\Gamma$ is defined to be the fixed-point planar subalgebra of the spin model planar algebra. The $2$-box space is spanned by Temperley-Lieb diagrams and the adjacency matrix $A_\Gamma$. Therefore, one can ask whether the planar algebra $\sP_\bullet^\Gamma$ is generated by the adjacency matrix $A_\Gamma$. Since the spin model planar algebra $\sP^\Gamma_\bullet$ is defined by the combinatorial data of the graph $\Gamma$, the generating property in Question \ref{quest: VJ} is intrinsically determined by $\Gamma$. 
\begin{definition}\label{def: property G}
	Let $\Gamma$ be a strongly regular graph. We say $\Gamma$ has property $(G)$ if the associated planar algebra $\sP^\Gamma$ has the generating property, namely, it is generated by its adjacency matrix $A_\Gamma$. 
\end{definition}
The referred subfactor planar algebra in Question \ref{quest: VJ} can be obtained from the Kneser graph $KG_{n,2}$. Therefore, Question \ref{quest: VJ} is equivalent to ask whether $KG_{n,2}$ has property $(G)$ for $n\geq 5$. In \cite{RenThesis}, we provide an affirmative answer to Question \ref{quest: VJ} in the case when $n=5$, namely, the Petersen graph $KG_{5,2}$ has property $(G)$. 

In this paper, by exploiting the universal skein theory for group-action models \cite{Ren19Uni}, we first give constructions of generators for the planar algebras $\kG{n}_\bullet$ under the assumption that the transposition $R$ is generated by $2$-boxes, namely, $R\in \langle \kG{n}_2\rangle$. Then we confirm the validity of the assumption provided with a universal construction, and thus we prove the main theorem, namely, 
\begin{main}
	The Kneser graph $KG_{n,2}$ has property $(G)$ for $n\geq 5$. 
\end{main}
We first remark that the relation between the transposition and the generating property was first revealed independently by Jones \cite{JonPC} and Curtin \cite{Cur03}. They showed that any planar subalgebra $\mathscr{Q}_\bullet$ of some spin model, $\mathscr{Q}$ has the generating property if and only if $R\in\langle \mathscr{Q}_2\rangle$. We enhance the statement by dropping the assumption that $\mathscr{Q}$ is a planar subalgebra of some spin model. In this case, the transposition $R$ is characterized by skein relations. 

Secondly, it was pointed out by Snyder and Reutter that the generating property in Definition \ref{def: property G} is also studied in the theory of quantum permutation groups \cite{LMR17, Cha19, MRV19}. Quantum automorphism groups were defined by Banica (See e.g.\cite{Ban05}). A graph $\Gamma$ is said to have \textit{no quantum symmetry} if its quantum automorphism group coincides with its automorphism group. Moreover, we have that
\begin{equation}
\Gamma\text{ has property $(G)$} \Longleftrightarrow \Gamma \text{~has no quantum symmetry.}
\end{equation}
In the theory of quantum permutation groups, It is an important task to determine graphs having no quantum symmetry. In \cite{BanBic07}, Banica and Bichon computed the quantum automorphism groups for strongly regular graphs with vertices less than or equal to 11, except for the Petersen graph. Therefore, Main theorem confirms that the Petersen graph has no quantum symmetry and provide an infinite family of strongly regular graphs with no quantum symmetry, namely,
\begin{cor}
	The Kneser graphs $KG_{n,2}$ has no quantum symmetry for $n\geq 5$. 
\end{cor}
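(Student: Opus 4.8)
The plan is to obtain this as an immediate consequence of the Main Theorem together with the equivalence
\[
\Gamma \text{ has property }(G)\ \Longleftrightarrow\ \Gamma \text{ has no quantum symmetry}
\]
recorded above. First, the Main Theorem gives that $KG_{n,2}$ has property $(G)$ for every $n\geq 5$, i.e.\ the group-action planar algebra $\kG{n}_\bullet$ is generated by the adjacency matrix $A_{KG_{n,2}}$. Second, one invokes the identification coming from the planar-algebraic description of the quantum automorphism group of a graph (as in \cite{LMR17, Cha19, MRV19}): the planar subalgebra of the spin model planar algebra generated by $A_\Gamma$ is exactly the fixed-point planar algebra of the quantum automorphism group $G^+(\Gamma)$, whereas $\sP^\Gamma_\bullet$ is by definition the fixed-point planar algebra of the classical automorphism group $\mathrm{Aut}(\Gamma)$. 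Property $(G)$ asserts that these two planar algebras coincide, and by the Tannaka--Krein-type duality for quantum permutation groups this forces $G^+(\Gamma)=\mathrm{Aut}(\Gamma)$, i.e.\ $\Gamma$ has no quantum symmetry. Taking $\Gamma=KG_{n,2}$ for $n\geq 5$ yields the corollary; in particular the case $n=5$ settles the Petersen graph, the single strongly regular graph on at most $11$ vertices left undecided in \cite{BanBic07}.

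Since the equivalence between property $(G)$ and the absence of quantum symmetry is already in place, there is no substantive obstacle in this last step: the corollary is a direct translation of the Main Theorem. The only point deserving care is a bookkeeping one — checking that the normalization conventions used for the spin model planar algebra and for the magic-unitary action of the quantum permutation group are compatible, so that ``$\kG{n}_\bullet$ is generated by $A_{KG_{n,2}}$'' matches precisely with ``the $G^+$-fixed-point algebra equals the $\mathrm{Aut}$-fixed-point algebra.'' Once this is verified, the statement follows.
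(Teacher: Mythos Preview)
Your proposal is correct and matches the paper's approach exactly: the corollary is obtained directly by combining the Main Theorem (that $KG_{n,2}$ has property $(G)$ for $n\geq 5$) with the stated equivalence between property $(G)$ and the absence of quantum symmetry. The paper offers no further argument beyond this, so your write-up, if anything, supplies more detail than the original.
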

In the end, Main Theorem confirms that the simplest generator for the planar algebra for $\kG{n}_\bullet$ is a single $2$-box $A_\Gamma$. However, Universal skein theory for group actions tells us that in the simplest skein theory, the generators are a $2$-box and an $n$-box; and one of the relations appears in the $2n$-box space. This phenomenon gives us a hint that the complexity of skein theory might be more subtle than the sizes of generators and relations. 

\textbf{Acknowledgment.} The author would like to thank Vaughan Jones for support, encouragement, and many inspiring conversations and Arthur Jaffe, Zhengwei Liu for many helpful discussions. The author would also like to thank Noah Snyder and David Reutter for pointing out the connection to the theory of quantum permutation group. The research was partially supported by DMS-1362138 and also supported by TRT 0159 from the Templeton Religion Trust.
\section{preliminary}\label{sec: pre}
In this section, we recall the basics of spin models and group-action models. We refer the readers for more details to \cite{Jon99} for spin model planar algebras and \cite{Ren19Uni} for group-action models. 
\begin{definition}[Spin models]\label{def: spin model}
	Let $X$ be a finite set with size $d$. The spin model $\mathscr{P}_\bullet$ associated to $X$ is a family of vector spaces $\{\mathscr{P}_n:n\geq 0\}$, where $\mathscr{P}_n=\mathcal{F}(X^k,\mathbb{C})$, namely, the complexed-valued functions on $X^k$. Moreover, there are three basic operations on $\mathscr{P}_\bullet$:
	\begin{itemize}
		\item Tensor product$\colon$
		\begin{equation}\label{equ: tensor product}
		f\otimes g(x_1,x_2,\cdots,x_n,y_1,y_2,\cdots,y_m)=f(x_1,x_2,\cdots,x_n)g(y_1,y_2,\cdots,y_m).
		\end{equation}
		\item Contraction$\colon$ for $1\leq k\leq n$,
		\begin{equation}\label{equ: contraction}
		C_{k,k+1}(f)(x_1,x_2,\cdots,x_n)=\delta_{x_k,x_{k+1}}\cdot (x_1,x_2,\cdots,x_{k-1},x_{k+2},\cdots,x_n),
		\end{equation}
		where $\delta$ is the Kronecker delta.
		\item Permutation$\colon$ for $1\leq k\leq n$,
		\begin{equation}\label{equ: permutation}
		S_{k,n}(f)(x_1,x_2,\cdots,x_n)=(x_1,x_2,\cdots,x_{k-1},x_{k+1},x_k,x_{k+2},\cdots,x_n).
		\end{equation} 
	\end{itemize}    
\end{definition}
\begin{definition}[Group-action models]\label{def: group-action model}
	Let $X$ be a finite set of size $d$ and $\mathscr{P}_\bullet$ be the spin model associated to $X$. Suppose there exists an action $\beta$ of a finite group $G\leq S_d$ on the set $X$ and thus the action $\beta$ can be extended diagonally on $X^n$ by 
	\begin{equation}
	\beta(g)(x_1,x_2,\cdots,x_n)=(\beta(g)x_1,\beta(g)x_2,\cdots,\beta(g)x_n).
	\end{equation}
	Therefore, this induces an action of $G$ on the spin model $\mathscr{P}_\bullet$, still denoted by $\beta$. The group $G$ is indeed the gauge symmetry of the spin model. The group-action model, denoted by $\mathscr{P}^G_\bullet$ is defined to be fixed points under the group action $\beta$, namely,
	\begin{equation}
	\mathscr{P}^G_n=\{f\in\mathscr{P}_n: \beta(g)f=f,~\forall~g\in G\}.
	\end{equation}
	It is straightforward to verify that the group-action model is closed under the three operations defined in Definition \ref{def: spin model}. 
\end{definition}
In \cite{Ren19Uni}, we provide a universal skein theory for group-action models. Here, we recall the generators in the following proposition. 
\begin{proposition}\label{pro: diagrammatic representation}
	Let $\mathscr{P}_\bullet^G$ be a group-action model associated to the group action $G\curvearrowright X$ where $X$ is a set of size $d$. Suppose $f\in\mathscr{P}_n^G$ for some $n\in\mathbb{N}$. We represent $f$ as the following diagram
	\begin{figure}[H]
		\begin{tikzpicture}
		\draw (0,0) rectangle (1,-.6);
		\node at (.5,-.3) {$f$};
		\node at (-.1,-.3) {$\$$};
		\draw (.1,0)--(.1,.5);
		\draw (.3,0)--(.3,.5);
		\draw (.9,0)--(.9,.5);
		\draw [fill=black] (.1,.5) circle [radius=.04];
		\draw [fill=black] (.3,.5) circle [radius=.04];
		\draw [fill=black] (.9,.5) circle [radius=.04];
		\node at (.6,.25) {$\cdots$};
		\draw[decoration={brace,raise=5pt},decorate, thick, blue]
		(.1,.45) -- node[above=6pt] {$n$} (.9,.45);
		\end{tikzpicture}.
	\end{figure}
	The group-action model $\mathscr{P}_\bullet^G$ is generated by 
	\begin{itemize}
		\item The $\GHZ$ tensor: let $I_3=\{(x,x,x): x\in X\}$. The $\GHZ$ tensor is defined to be $\chi_{I_3}$, namely, the characteristic function of $I_3$. Moreover, the $\GHZ$ tensor is represented by the following diagram.
		\begin{equation}
		\begin{tikzpicture}
		\draw (0,0)--(0,.5);
		\draw (-.5,-.5)--(0,0)--(.5,-.5);
		\draw [fill=black] (0,0) circle [radius=.04];
		\draw [fill=black] (0,.5) circle [radius=.04];
		\draw [fill=black] (-.5,-.5) circle [radius=.04];
		\draw [fill=black] (.5,-.5) circle [radius=.04];
		\node at (-.75,0) [left] {$\GHZ=$};
		\end{tikzpicture}.
		\end{equation}
		\item The transposition $R$: For an arbitrary point $(x_1,x_2,x_3,x_4)\in X^4$, we define the transposition $R$ by $R(x_1,x_2,x_3,x_4)=\delta_{x_1,x_3}\delta_{x_2,x_4}$. Moreover, $R$ is represented by the following diagram.
		\begin{equation}
		\begin{tikzpicture}
		\draw (-.5,-.5)--(.5,.5);
		\draw (-.5,.5)--(.5,-.5);
		\draw [fill=black] (-.5,-.5) circle [radius=.04];
		\draw [fill=black] (.5,-.5) circle [radius=.04];
		\draw [fill=black] (-.5,.5) circle [radius=.04];
		\draw [fill=black] (.5,.5) circle [radius=.04];
		\node at (-.75,0) [left] {$R=$};
		\end{tikzpicture}.
		\end{equation}
		\item The molecule $M$: let $S_M=\{(\beta(g)1,\beta(g)2,\cdots,\beta(g)d): g\in G\}$. The molecule $M$ is defined to be $\chi_{S_M}$, namely, the characteristic function of $S_M$. 
	\end{itemize}
\end{proposition}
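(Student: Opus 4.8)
The plan is to exhibit an explicit basis of each box space and then express each basis vector as a planar composite of $\GHZ$, $R$ and $M$; the reverse containment is immediate, since all three generators are $G$-invariant and $\mathscr{P}^G_\bullet$ is closed under the planar operations. First I would note that $f\in\mathscr{P}_n$ lies in $\mathscr{P}^G_n$ exactly when $f$ is constant on the orbits of the diagonal action $G\curvearrowright X^n$, so that the orbit indicators $\chi_{G\cdot p}$ (one for each $G$-orbit, $p$ a representative) form a basis of $\mathscr{P}^G_n$; it therefore suffices to build each $\chi_{G\cdot p}$ from the generators. Along the way I would record the routine bookkeeping: from $\GHZ=\GHZ_3$ one gets, by repeatedly merging (contracting a leg of one copy against a leg of another), the $m$-legged copy tensors $\GHZ_m=\chi_{\{(x,\dots,x):x\in X\}}$ for every $m\ge 0$; in particular $\GHZ_2$ is the Kronecker delta and $\GHZ_1$ the all-ones covector, whose contraction against a strand ``traces out'' that strand. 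One also checks that $R$ is precisely the crossing of two strands, so that words in $R$ realise every strand permutation inside the planar algebra --- which is exactly why $R$ must be adjoined, the permutation operation $S_{k,n}$ not being planar.

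The heart of the argument is to use $M$ as a single ``roaming'' group element. Writing $X=\{1,\dots,d\}$, a point of $S_M$ is $(\beta(g)1,\dots,\beta(g)d)$, so the $i$-th leg of $M$ records the value $\beta(g)i$ for one common but arbitrary $g\in G$. Given $p=(p_1,\dots,p_n)$, put $m_i=\#\{j:p_j=i\}$. For each $i$ with $m_i\ge 1$ I contract one leg of $\GHZ_{m_i+1}$ onto the $i$-th leg of $M$, splitting it into $m_i$ legs all carrying $\beta(g)i$; for each $i$ with $m_i=0$ I cap the $i$-th leg of $M$ with $\GHZ_1$. Because the copy tensors only impose equalities and never introduce a second group element, the resulting $n$-box is supported on the tuples in which the value $i$ occurs $m_i$ times, all coordinates computed from one shared $g$; a final $R$-word, implementing the permutation that moves a leg carrying $\beta(g)p_j$ into position $j$, converts this into a function supported exactly on $\{(\beta(g)p_1,\dots,\beta(g)p_n):g\in G\}=G\cdot p$. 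A short count shows this composite equals $|H|\,\chi_{G\cdot p}$, where $H\le G$ is the pointwise stabiliser of the set of values appearing in $p$ --- a nonzero scalar, since the splittings and $R$-permutations contribute no factor and only the caps on the unused legs do, together producing the integer $|H|$.

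Dividing out $|H|$ --- legitimate because the box spaces are complex vector spaces, so the generated planar subalgebra is a linear subspace --- gives $\chi_{G\cdot p}\in\langle\GHZ,R,M\rangle$, and as $n$ and $p$ were arbitrary this yields $\mathscr{P}^G_\bullet=\langle\GHZ,R,M\rangle$. The step I expect to be the main obstacle is making this diagrammatic bookkeeping fully rigorous: checking that the $\GHZ$-splittings leave the surviving variables unconstrained, that the ordering dictated by $p$ is always attainable by a word in $R$, and that the degenerate cases --- very small $n$, values of $X$ absent from $p$, or an intransitive action $G\curvearrowright X$ --- are all covered, as well as fixing the conventions for the contraction and for the boundary labelling under which $R$ is read as the crossing.
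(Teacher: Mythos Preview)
The paper does not actually prove this proposition: it appears in the preliminaries section with the sentence ``In \cite{Ren19Uni}, we provide a universal skein theory for group-action models. Here, we recall the generators in the following proposition,'' and no proof follows. So there is nothing in this paper to compare your argument against.

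That said, your argument is the natural one and is correct. The mechanism you describe --- a single copy of $M$ supplies one roaming $g\in G$, the $\GHZ$-splitters duplicate each value $\beta(g)i$ into $m_i$ identical legs, caps by $\GHZ_1$ trace out the coordinates $i$ absent from $p$, and a word in $R$ permutes the surviving legs into the order prescribed by $p$ --- is exactly how the universal skein theory works, and is what one would expect \cite{Ren19Uni} to contain. Your scalar computation is also right: because $G\le S_d$ acts faithfully, the map $g\mapsto(\beta(g)1,\dots,\beta(g)d)$ is injective, so summing $M$ over the capped coordinates counts the $g\in G$ agreeing with a prescribed assignment on $\{p_1,\dots,p_n\}$, which is either $0$ or $|H|$ for $H$ the pointwise stabiliser of that set; this is a positive integer, so division is harmless. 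The residual bookkeeping you flag --- building $\GHZ_m$ for all $m$ from $\GHZ_3$ via iterated contraction, realising an arbitrary permutation as a product of adjacent transpositions $R$, and the degenerate small-$n$ or intransitive cases --- is genuinely routine and introduces no hidden difficulty.
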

\begin{remark}
	Let $H$ be the stabilizer of a single point $x\in X$. Then the group-action model $\sP^G_\bullet$ is the \textit{even part} of the group-subgroup subfactor planar algebra for $H\leq G$. In the language of subfactor planar algebras, the $\GHZ$ tensor is exactly the Temperley-Lieb diagram, \raisebox{-2mm}{
		\begin{tikzpicture}
		\begin{scope}[xscale=1.5, yscale=1.5]
		\path [fill=lightgray] (-.1,0) arc[radius=.1, start angle=180, end angle=0]--(.3,0)--(.3,.1) arc[radius=.1, start angle=0, end angle=90] arc[radius=.1, start angle=270, end angle=180]--(.1,.4)--(-.1,.4)--(-.1,.3) arc[radius=.1, start angle=0, end angle=-90] arc[radius=.1, start angle=90, end angle=180]--(-.3,0);
		\draw (-.1,0) arc[radius=.1, start angle=180, end angle=0];
		\draw (.3,0)--(.3,.1) arc[radius=.1, start angle=0, end angle=90] arc[radius=.1, start angle=270, end angle=180]--(.1,.4);
		\draw (-.1,.4)--(-.1,.3) arc[radius=.1, start angle=0, end angle=-90] arc[radius=.1, start angle=90, end angle=180]--(-.3,0);
		\end{scope}
		\end{tikzpicture}}. 
\end{remark}
\begin{remark}
	In general, the $\GHZ_n$ tensor is defined to be the characteristic function on the set
	\begin{equation}
	\{(x,x,\cdots,x):x\in X\}.
	\end{equation}
	Moreover, it is represented by the following diagram:
	\begin{figure}[H]
		\begin{tikzpicture}
		\draw (.1,.5)--(.1,0) arc[radius=.2,start angle=180, end angle=270]--(.5,-.2) ;
		\draw (.3,.5)--(.3,0) arc[radius=.2,start angle=180, end angle=270];
		\draw (.5,-.2)--(.7,-.2) arc [radius=.2,start angle=270, end angle=360];
		\draw (.9,0)--(.9,.5);
		\draw [fill=black] (.5,-.2) circle [radius=.04];
		\draw [fill=black] (.1,.5) circle [radius=.04];
		\draw [fill=black] (.3,.5) circle [radius=.04];
		\draw [fill=black] (.9,.5) circle [radius=.04];
		\node at (.6,.25) {$\cdots$};
		\draw[decoration={brace,raise=5pt},decorate, thick, blue]
		(.1,.4) -- node[above=6pt] {$n$} (.9,.4);
		\end{tikzpicture}.
	\end{figure}
\end{remark}

\section{The generating property of subfactors}\label{sec: generating property}

The $2$-box space $\mathscr{P}_2$ is spanned by $\{I,J,A\}$, where $I$ is the identity matrix, $J$ is the matrix with each entry being 1 and $A$ is the adjacent matrix of $KG_{n,2}$. We represent these rank-$2$ tensors by the following diagrams:
\begin{figure}[H]
	\begin{tikzpicture}
	\draw (0,0)--(0,1);
	\draw [fill=black] (0,0) circle [radius=.04];
	\draw [fill=black] (0,1) circle [radius=.04];
	\draw [fill=black] (2,0) circle [radius=.04];
	\draw [fill=black] (2,1) circle [radius=.04];
	\draw (4,0)--(4,1);
	\draw [red] (4,.5) circle [radius=.04];
	\draw [fill=black] (4,0) circle [radius=.04];
	\draw [fill=black] (4,1) circle [radius=.04];
	\end{tikzpicture}.\caption{The rank-$2$ tensors $I$, $J$ and $A$.}
\end{figure}
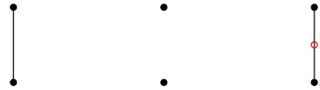
Moreover, the adjacent matrix $T$ for the complement of $KG_{n,2}$ is given by $T=J-I-A$ and we represent it as\raisebox{-4.2mm}{\begin{tikzpicture}
	\path [fill=white]  (-.5,0)rectangle (0,.2);
	\draw [cyan](0,0)--(0,1);
	\draw [fill=black] (0,0) circle [radius=.04];
	\draw [fill=black] (0,1) circle [radius=.04];
	\draw [ultra thick, cyan] (-.05,.5)--(.05,.5);
	\end{tikzpicture}}. Therefore the equation $J=I+T+A$ tells us that 
\begin{equation}
\begin{tikzpicture}
\begin{scope}
\draw (0,0)--(0,1);
\draw [fill=black] (0,0) circle [radius=.04];
\draw [fill=black] (0,1) circle [radius=.04];
\node at (.5,.5) {$+$};
\end{scope}
\begin{scope}[shift={(2,0)}]
\draw [cyan](0,0)--(0,1);
\draw [fill=black] (0,0) circle [radius=.04];
\draw [fill=black] (0,1) circle [radius=.04];
\draw [ultra thick, cyan] (-.05,.5)--(.05,.5);
\end{scope}
\begin{scope}[shift={(1,0)}]
\draw [red](0,0)--(0,1);
\draw [fill=black] (0,0) circle [radius=.04];
\draw [fill=black] (0,1) circle [radius=.04];
\draw [red](0,.5) circle [radius=.04]; 
\node at (.5,.5) {$+$};
\begin{scope}[shift={(-2,0)}]
\draw [fill=black] (0,0) circle [radius=.04];
\draw [fill=black] (0,1) circle [radius=.04];
\node at (.5,.5) {$=$};
\end{scope}
\end{scope}
\end{tikzpicture}.
\end{equation}
\begin{proposition}
	Let $R\in\mathscr{P}_4^G$ be the transposition. We define
	\begin{itemize}
		\item The element $R_A$ as follows:
		\begin{figure}[H]
			\begin{tikzpicture}
			\draw (-.5,-.5)--(.5,.5);
			\draw (-.5,.5)--(.5,-.5);
			\draw [fill=black] (-.5,-.5) circle [radius=.04];
			\draw [fill=black] (.5,-.5) circle [radius=.04];
			\draw [fill=black] (-.5,.5) circle [radius=.04];
			\draw [fill=black] (.5,.5) circle [radius=.04];
			\draw (-.25,.25)--(.25,.25);
			\draw [fill=black] (-.25,.25) circle [radius=.04];
			\draw [fill=black] (.25,.25) circle [radius=.04];
			\draw [red] (0,.25) circle [radius=.04];
			\node at (.75,0) [right] {$\in\mathscr{A}_4.$};
			\end{tikzpicture}
		\end{figure}
		\item The element $R_T$ as follows:
		\begin{figure}[H]
			\begin{tikzpicture}
			\draw (-.5,-.5)--(.5,.5);
			\draw (-.5,.5)--(.5,-.5);
			\draw [fill=black] (-.5,-.5) circle [radius=.04];
			\draw [fill=black] (.5,-.5) circle [radius=.04];
			\draw [fill=black] (-.5,.5) circle [radius=.04];
			\draw [fill=black] (.5,.5) circle [radius=.04];
			\draw [cyan] (-.25,.25)--(.25,.25);
			\draw [fill=black] (-.25,.25) circle [radius=.04];
			\draw [fill=black] (.25,.25) circle [radius=.04];
			\draw [cyan,ultra thick] (0,.3)--(0,.2);
			\node at (.75,0) [right] {$\in\mathscr{A}_4.$};
			\end{tikzpicture}
		\end{figure}
	\end{itemize}
	Then we have that 
	\begin{equation}\label{equ: decompostion of R}
	\begin{tikzpicture}
	\begin{scope}[shift={(-4,0)}]
	\draw (-.5,-.5)--(.5,.5);
	\draw (-.5,.5)--(.5,-.5);
	\draw [fill=black] (-.5,-.5) circle [radius=.04];
	\draw [fill=black] (.5,-.5) circle [radius=.04];
	\draw [fill=black] (-.5,.5) circle [radius=.04];
	\draw [fill=black] (.5,.5) circle [radius=.04];
	\node at (1,0) {$=$};
	\end{scope}
	\begin{scope}[shift={(-2,0)}]
	\draw [fill=black] (-.5,-.5) circle [radius=.04];
	\draw [fill=black] (.5,-.5) circle [radius=.04];
	\draw [fill=black] (-.5,.5) circle [radius=.04];
	\draw [fill=black] (.5,.5) circle [radius=.04];
	\draw [fill=black] (0,0) circle [radius=.04];
	\draw (.5,.5)--(0,0)--(.5,-.5);
	\draw (-.5,.5)--(0,0)--(-.5,-.5);
	\node at (1,0) {$+$};
	\end{scope}
	\begin{scope}
	\draw (-.5,-.5)--(.5,.5);
	\draw (-.5,.5)--(.5,-.5);
	\draw [fill=black] (-.5,-.5) circle [radius=.04];
	\draw [fill=black] (.5,-.5) circle [radius=.04];
	\draw [fill=black] (-.5,.5) circle [radius=.04];
	\draw [fill=black] (.5,.5) circle [radius=.04];
	\draw (-.25,.25)--(.25,.25);
	\draw [fill=black] (-.25,.25) circle [radius=.04];
	\draw [fill=black] (.25,.25) circle [radius=.04];
	\draw [red] (0,.25) circle [radius=.04];
	\node at (1,0) {$+$};
	\end{scope}
	\begin{scope}[shift={(2,0)}]
	\draw (-.5,-.5)--(.5,.5);
	\draw (-.5,.5)--(.5,-.5);
	\draw [fill=black] (-.5,-.5) circle [radius=.04];
	\draw [fill=black] (.5,-.5) circle [radius=.04];
	\draw [fill=black] (-.5,.5) circle [radius=.04];
	\draw [fill=black] (.5,.5) circle [radius=.04];
	\draw [cyan] (-.25,.25)--(.25,.25);
	\draw [fill=black] (-.25,.25) circle [radius=.04];
	\draw [fill=black] (.25,.25) circle [radius=.04];
	\draw [cyan,ultra thick] (0,.3)--(0,.2);
	\end{scope}
	\end{tikzpicture}.
	\end{equation}
\end{proposition}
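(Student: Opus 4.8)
The plan is to read each of the four pictures in \eqref{equ: decompostion of R} as a function on $X^4$; once that is done, the identity collapses to the $2$-box relation $J = I + T + A$, ``resolved'' along one strand of the crossing. So first I would record the four functions. By definition $R(x_1,x_2,x_3,x_4)=\delta_{x_1,x_3}\delta_{x_2,x_4}$, so the crossing forces the value $x_1=x_3=:u$ along the strand through the lower-left and upper-right endpoints and $x_2=x_4=:v$ along the other strand. Consequently a rung joining the two through-strands and decorated by a $2$-box $B\in\mathscr{P}_2$ just multiplies $R$ by the scalar $B(u,v)$; in particular
\begin{equation*}
R_A(x_1,x_2,x_3,x_4)=\delta_{x_1,x_3}\delta_{x_2,x_4}\,A(x_1,x_2),\qquad R_T(x_1,x_2,x_3,x_4)=\delta_{x_1,x_3}\delta_{x_2,x_4}\,T(x_1,x_2),
\end{equation*}
while the first summand on the right-hand side of \eqref{equ: decompostion of R} is the four-valent vertex, i.e.\ the $\GHZ_4$ tensor $\chi_{\{(x,x,x,x)\}}=\delta_{x_1=x_2=x_3=x_4}$.

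Then I would simply add the three right-hand functions. Using $A+T=J-I$, i.e.\ $A(x_1,x_2)+T(x_1,x_2)=1-\delta_{x_1,x_2}$, the sum equals $\delta_{x_1=x_2=x_3=x_4}+\delta_{x_1,x_3}\delta_{x_2,x_4}\bigl(1-\delta_{x_1,x_2}\bigr)$; since the product $\delta_{x_1,x_3}\delta_{x_2,x_4}\delta_{x_1,x_2}$ already forces all four coordinates to agree, the two ``diagonal'' contributions cancel and what remains is exactly $\delta_{x_1,x_3}\delta_{x_2,x_4}=R$, which is \eqref{equ: decompostion of R}.

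Equivalently one can phrase the same computation diagrammatically: a rung labelled $J$ across the two strands of the crossing is transparent (it contributes the constant $1$), so $R$ equals $R$-with-a-$J$-rung; expanding $J=I+T+A$ splits this into three pictures, the $I$-rung identifying the two strands and collapsing the crossing to the $\GHZ_4$ vertex, and the $A$- and $T$-rungs giving $R_A$ and $R_T$ by definition. The only step deserving a moment's care is this collapse — that joining the two strands of the crossing by a plain strand produces exactly the central-vertex picture, with coefficient $1$ and no stray Temperley--Lieb factor — and that is immediate from the function-level description above. So I do not expect a genuine obstacle here: the proposition is a bookkeeping consequence of the $2$-box relation $J=I+T+A$, and the only thing to be vigilant about is matching the pictorial conventions (which strands of $R$ are identified, and the harmlessness of where on those strands the decorated rung is attached, which is isotopy invariance).
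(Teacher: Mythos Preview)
Your argument is correct and is exactly the ``direct computation'' the paper alludes to: you read each diagram as a function on $X^4$, use $A+T=J-I$ to collapse $R_A+R_T$ to $\delta_{x_1,x_3}\delta_{x_2,x_4}(1-\delta_{x_1,x_2})$, and observe that the $\delta_{x_1,x_2}$ term cancels against $\GHZ_4$. The paper's own proof is the single line ``It follows directly by computation,'' so you have simply spelled out what that computation is.
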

\begin{proof}
	It follows directly by computation. 
\end{proof}
\begin{notation}
	We denote the submodel of $\mathscr{P}_\bullet^{S_n}$ generated by $\mathscr{P}_2$ by $\mathscr{A}_\bullet$. 
\end{notation}
\begin{theorem}\label{lem: symmetric}
	The strongly regular graph $KG_{n,2}$ has property $(G)$ if and only if 
	\begin{equation}\label{equ: keyS}
	\begin{tikzpicture}
	\begin{scope}[shift={(-1.5,0)}]
	\draw (0,0)--(0,.5);
	\draw (-.5,-.5)--(0,0)--(.5,-.5);
	\draw [fill=black] (0,0) circle [radius=.04];
	\draw [fill=black] (0,.5) circle [radius=.04];
	\draw [fill=black] (-.5,-.5) circle [radius=.04];
	\draw [fill=black] (.5,-.5) circle [radius=.04];
	\end{scope}
	\node at (-.75,-.5) {$,$};
	\draw (-.5,-.5)--(.5,.5);
	\draw (-.5,.5)--(.5,-.5);
	\draw [fill=black] (-.5,-.5) circle [radius=.04];
	\draw [fill=black] (.5,-.5) circle [radius=.04];
	\draw [fill=black] (-.5,.5) circle [radius=.04];
	\draw [fill=black] (.5,.5) circle [radius=.04];
	\node at (.75,0) [right] {$\in\mathscr{A}_4.$};
	\end{tikzpicture}
	\end{equation}
\end{theorem}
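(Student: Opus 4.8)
The plan is to recast property $(G)$ as an equality of planar algebras and then to test it against the three generators of Proposition~\ref{pro: diagrammatic representation}. Recall that $\mathscr{P}^{KG_{n,2}}_\bullet$ is the group-action model $\mathscr{P}^{S_n}_\bullet$, that $\mathscr{P}_2=\mathrm{span}\{I,J,A\}$, and that $I$ and $J$ are Temperley--Lieb diagrams, hence present in every planar subalgebra; so $\mathscr{A}_\bullet=\langle\mathscr{P}_2\rangle=\langle A\rangle$, and property $(G)$ is precisely the equality $\mathscr{A}_\bullet=\mathscr{P}^{S_n}_\bullet$, while \eqref{equ: keyS} says exactly that $\GHZ,R\in\mathscr{A}_\bullet$. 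One direction is then immediate: if $\mathscr{A}_\bullet=\mathscr{P}^{S_n}_\bullet$, then the $S_n$-invariant tensors $\GHZ$ and $R$ lie in $\mathscr{P}^{S_n}_\bullet=\mathscr{A}_\bullet$.

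For the converse I would argue through the molecule. Assume $\GHZ,R\in\mathscr{A}_\bullet$. Since $\mathscr{P}^{S_n}_\bullet=\langle\GHZ,R,M\rangle$ by Proposition~\ref{pro: diagrammatic representation}, it suffices to show $M\in\langle A,\GHZ,R\rangle$, for then $\mathscr{A}_\bullet\supseteq\langle\GHZ,R,M\rangle=\mathscr{P}^{S_n}_\bullet\supseteq\mathscr{A}_\bullet$ forces equality. Fix the enumeration $X=\{x_1,\dots,x_d\}$ with $d=\binom{n}{2}$ and consider the $d$-box
\[
M_1(v_1,\dots,v_d)=\prod_{1\leq i<j\leq d}c_{ij}(v_i,v_j),\qquad c_{ij}=\begin{cases}A,&\text{if $x_i,x_j$ are adjacent in $KG_{n,2}$,}\\ T,&\text{otherwise,}\end{cases}
\]
with $T=J-I-A$ the adjacency matrix of the complement. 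Every factor vanishes unless its two arguments differ, so $M_1(v_1,\dots,v_d)=1$ exactly when the assignment $x_i\mapsto v_i$ is a bijection of $X$ preserving adjacency and non-adjacency, that is, a graph automorphism of $KG_{n,2}$. By the classical rigidity $\mathrm{Aut}(KG_{n,2})=S_n$ for $n\geq 5$, each such automorphism comes from a permutation of $\{1,\dots,n\}$, hence $M_1=\chi_{S_M}=M$.

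What remains, and what I expect to be the main obstacle, is the purely diagrammatic task of realizing $M_1$ as a planar tangle labeled by $A,\GHZ,R,I,J$ --- a graph-isomorphism gadget. One attaches to each input $v_i$ a copy of the $d$-valent tensor $\GHZ_d$ (built from $\GHZ$ as in the remark following Proposition~\ref{pro: diagrammatic representation}), creating $d-1$ copies of $v_i$ forced to agree; one routes these copies through a fixed permutation of strands --- implemented by composing crossings $R$, needed because this wiring realizes the (non-planar) complete graph on the $d$ inputs --- so that each pair $i<j$ delivers its two designated copies into the $2$-box $c_{ij}$; and one closes the remaining legs by Temperley--Lieb caps. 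Evaluating this tangle returns exactly $M_1$, and every ingredient lies in $\langle A,\GHZ,R\rangle\subseteq\mathscr{A}_\bullet$, so $M\in\mathscr{A}_\bullet$ and the equivalence follows. Beyond the bookkeeping of the $\binom{d}{2}$ checks, the copyings, and the routing, the only conceptual inputs are the automorphism rigidity of the Kneser graph and the fact (cf. the same remark) that $\GHZ$ is a Temperley--Lieb diagram and therefore lies in $\mathscr{A}_\bullet$ automatically --- so the criterion effectively tests only whether $R\in\mathscr{A}_\bullet$, in agreement with the Jones--Curtin characterization.
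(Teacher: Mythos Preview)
Your argument is correct and follows the same global strategy as the paper: the forward direction is immediate, and for the converse you reduce to showing that the molecule $M$ lies in $\langle A,\GHZ,R\rangle$, then realize $M$ as a labeled planar tangle built from those ingredients together with the rigidity $\mathrm{Aut}(KG_{n,2})=S_n$.

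The route, however, is genuinely different at the construction step. You wire up the \emph{complete} graph on the $d=\binom{n}{2}$ inputs, labeling each pair by $A$ or $T$ according to adjacency in $KG_{n,2}$; since both $A$ and $T$ have zero diagonal, injectivity of $(v_1,\dots,v_d)$ is automatic, and the support of your $M_1$ is immediately identified with the automorphisms of the graph. The paper instead wires only the Kneser graph itself---each vertex becomes $\GHZ_{\binom{n-2}{2}}$, each Kneser edge becomes a copy of $A$, and no $T$'s appear---producing a much smaller tangle $X_n$, but at the cost of having to argue combinatorially (with a case split on the parity of $n$, exploiting the clique structure of $KG_{n,2}$) that the coordinates of any tuple in the support are pairwise distinct. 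Your version trades a larger tangle and heavier bookkeeping for a cleaner conceptual step; the paper's trades a delicate distinctness argument for diagrammatic economy. Both invoke $\mathrm{Aut}(KG_{n,2})=S_n$ at the end, though the paper leaves this implicit. (A minor point: in your wiring there are no ``remaining legs'' to cap---each $\GHZ_d$ has exactly one leg to the boundary and $d-1$ legs to the $c_{ij}$'s.)
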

\begin{proof}
	Suppose $R\in\mathscr{A}_4$. We construct an element $X_n$ in $\mathscr{A}_{\binom{d}{2}}$ with respect to $KG_{n,2}$ as follows:
	\begin{enumerate}
		\item Draw $KG_{n,2}$ on $\mathbb{R}^2$ such that every vertex lies on the line $y=1$ and the remaining part of $KG_{n,2}$ are below the line $y=1$. Label the vertices from left to right by $1,2,\cdots,\binom{n}{2}$. 
		\item Each vertex is replaced by $\GHZ_{\binom{n-2}{2}}$; each edge is replaced by $A$; each crossing is replaced by $R$. This defines an element in $\mathscr{A}_{\binom{n}{2}}$, and we denote it by $X_n$. 
	\end{enumerate}
	
	Suppose $\vec{i}=(i_1,i_2,\cdots,i_{\binom{n}{2}})$ is an arbitrary point in $V^{\binom{n}{2}}$. By the construction, we have that $X_n(\vec{i})$ must take a value in $\{0,1\}$. Set $M=\{\vec{i}: X_n(\vec{i})=1\}\subset V^{\binom{n}{2}}$. Let $\vec{i}\in M$. First we show that for any $1\leq k,l\leq\binom{n}{2}$, we must have $i_k\neq i_l;$. This will be discussed in two cases by contradiction: Assume that there exists $1\leq k,l\leq \binom{n}{2}$ such that $i_k=i_l$. By definition of $A$, we know that $k$ and $l$ are not connected by an edge in $KG_{n,2}$.
	\begin{enumerate}
		\item When $n$ is odd:  By the construction of $KG_{n,2}$, there exists a subset $W\subset V$, such that
		\begin{itemize}
			\item The induced subgraph on $W$ is the complete graph on $\displaystyle\frac{n-1}{2}$ vertices. 
			\item There exists $a,b\in W$ such that $k$ is connected to every vertex in $W-\{b\}$ and $l$ is connected to every one in $W-\{a\}$. 
		\end{itemize}
		The assumption that $i_k=i_l$ implies that the induced subgraph on $\{i_k\}\sqcup\{i_w:w\in W\}$ is the complete graph on $\displaystyle\frac{n+1}{2}$ vertices. This leads to a contradiction.
		\item When $n$ is even: By the construction of $KG_{n,2}$, there exists $W\subset V$ and $c\in V$ such that
		\begin{itemize}
			\item The induced subgraph on $W\sqcup\{a\}$ is the complete graph on $\displaystyle\frac{n-2}2$ vertices. 
			\item There exists $a,b\in W$ such that $k$ is connected to every vertex in $W-\{b\}$ and $l$ is connected to every one in $W-\{a\}$. 
			\item The vertex $c$ is neither connected to $k$ nor $l$. 
		\end{itemize}
		Similarly, the assumption that $i_k=i_l$ implies that the induced subgraph on $\{i_k\}\sqcup\{i_w:w\in W\}$ is the complete graph on $\displaystyle\frac{n}2$ vertices.   Therefore, the two induced subgraphs on $\{i_k\}\sqcup\{i_w:w\in W\}$ and $\{i_c\}\sqcup\{i_w:w\in W\}$ are the complete graph on $n$ vertices and $i_k$ and $i_l$ are two different vertices. However, this is a contradiction by the Kneser construction of $KG_{n,2}$. 
	\end{enumerate}
	
	Therefore, for every $\vec{i}\in M$, we have that $i_j's$ are distinct. Let $g_{\vec{i}}$ be the permutation on $V$ defined by sending $i_j$ to $j$. By the construction of $X_n$, we know that $g_{\vec{i}}$ is an automorphism of $KG_{n,2}$. Moreover, for every automorphism $g$, we have that $X_n(\beta(g)1,\cdots,\beta(g)n)=1$. This implies that 
	\begin{equation}
	X_n=\chi_S=M.
	\end{equation}
	Since the assumption of this lemma asserts that $\GHZ$ and $R$ belongs to $\mathscr{A}_\bullet$, it follows from Proposition \ref{pro: diagrammatic representation} that
	\begin{equation}
	\mathscr{P}_\bullet^{S_n}\subset \mathscr{A}_\bullet\subset\mathscr{P}_\bullet^{S_n},
	\end{equation}  
	namely, the graph $KG_{n,2}$ has property $(G)$. 
	
	The other direction follows directly and therefore the lemma is proved.     
\end{proof}
\begin{lemma}\label{pro: reduce lemma}
	The strongly regular graph $KG_{n,2}$ has property $(G)$ if and only if
	\begin{equation}\label{equ: keySX}
	\begin{tikzpicture}
	\begin{scope}[shift={(-1.5,0)}]
	\draw (0,0)--(0,.5);
	\draw (-.5,-.5)--(0,0)--(.5,-.5);
	\draw [fill=black] (0,0) circle [radius=.04];
	\draw [fill=black] (0,.5) circle [radius=.04];
	\draw [fill=black] (-.5,-.5) circle [radius=.04];
	\draw [fill=black] (.5,-.5) circle [radius=.04];
	\end{scope}
	\node at (-.75,-.5) {$,$};
	\draw (-.5,-.5)--(.5,.5);
	\draw (-.5,.5)--(.5,-.5);
	\draw [fill=black] (-.5,-.5) circle [radius=.04];
	\draw [fill=black] (.5,-.5) circle [radius=.04];
	\draw [fill=black] (-.5,.5) circle [radius=.04];
	\draw [fill=black] (.5,.5) circle [radius=.04];
	\draw (-.25,.25)--(.25,.25);
	\draw [fill=black] (-.25,.25) circle [radius=.04];
	\draw [fill=black] (.25,.25) circle [radius=.04];
	\draw [red] (0,.25) circle [radius=.04];
	\node at (.75,0) [right] {$\in\mathscr{A}_4$};
	\end{tikzpicture}.
	\end{equation}
\end{lemma}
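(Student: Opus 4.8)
The plan is to prove both implications; one is immediate. If $KG_{n,2}$ has property $(G)$, then $\mathscr{A}_\bullet=\mathscr{P}_\bullet^{S_n}$ contains every element, in particular the $\GHZ$ tensor and $R_A$. For the converse, assume $\GHZ\in\mathscr{A}_\bullet$ and $R_A\in\mathscr{A}_\bullet$; by Theorem~\ref{lem: symmetric} it is enough to show that the transposition $R$ lies in $\mathscr{A}_\bullet$.

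First I would isolate what is missing. Since $\GHZ_4=\GHZ\circ\GHZ\in\langle\GHZ\rangle$, and the stacking $R_A\circ R_A$ is the $4$-box projecting onto pairs of adjacent spins, $\mathscr{A}_\bullet$ also contains the projection $P_T$ onto pairs of distinct non-adjacent spins, $P_T=\mathrm{id}_4-\GHZ_4-R_A\circ R_A$, where $\mathrm{id}_4$ is the two-through-strand Temperley--Lieb $4$-box. By \eqref{equ: decompostion of R} one has $R=\GHZ_4+R_A+R_T$, so the only piece not yet available is $R_T$, the transposition precomposed with $P_T$. Equivalently, for a $2$-box $X$ write $R_X$ for the crossing with $X$ inserted as a bridge, so that $R_I=\GHZ_4$, $R_J=R$, and $R_A,R_T$ are as above; since $\mathscr{P}_2=\operatorname{span}\{I,A,J\}$ and $A^2=(k-\mu)I+(\lambda-\mu)A+\mu J$ with $\mu=\binom{n-3}{2}\neq 0$ for $n\geq 5$, one gets $R_{A^2}=(k-\mu)\GHZ_4+(\lambda-\mu)R_A+\mu R$, and so it is enough to place the single ``common-neighbour crossing'' $R_{A^2}(x_1,x_2,x_3,x_4)=\delta_{x_1,x_3}\delta_{x_2,x_4}\sum_m A(x_1,m)A(m,x_2)$ inside $\mathscr{A}_\bullet$.

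To do this I would build the molecule $M$ itself inside $\mathscr{A}_\bullet$ — this suffices, since $R$ is recovered from $M$ and the $\GHZ$ tensor by doubling two legs of $M$ with a $\GHZ$ tensor, tracing out the rest, and taking a suitable linear combination of the resulting $4$-boxes together with $\GHZ_4$ (cf.\ the generating statement of Proposition~\ref{pro: diagrammatic representation}). To construct $M$, I would repeat the argument in the proof of Theorem~\ref{lem: symmetric} verbatim, except that at each crossing of the chosen planar drawing of $KG_{n,2}$ one inserts $R_A$ in place of the transposition $R$, keeping the $\GHZ$ tensors at the vertices and the elements $A$ at the edges. For every spin vector $\vec{i}$ this produces $Y_n(\vec{i})=X_n(\vec{i})\cdot\prod_{\text{crossings }c}A(\sigma^c_1,\sigma^c_2)$, where $\sigma^c_1,\sigma^c_2$ are the spins carried by the two strands through $c$. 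Since $X_n(\vec{i})=1$ already forces $\vec{i}$ to be an automorphism of $KG_{n,2}$ (as shown in the proof of Theorem~\ref{lem: symmetric}) and automorphisms preserve adjacency, one has $Y_n=X_n=M$ as soon as the drawing is arranged so that at every crossing the two strands carry the spins of two adjacent vertices of $KG_{n,2}$; sliding each edge's $A$-box to one of its endpoints (so that every crossing along that edge sees the spin of the other endpoint), this becomes the purely combinatorial requirement that any two crossing edges have adjacent ``far'' endpoints. There is even some slack: replacing $R_A$ by $R_A+\GHZ_4$ at crossings relaxes ``adjacent'' to ``adjacent or equal''.

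The main obstacle is precisely this last point — exhibiting such a drawing of $KG_{n,2}$, or, since only the implication ``homomorphism $\Rightarrow$ automorphism'' is actually needed, of a sufficiently rigid subgraph of it — and this is where the combinatorics of the Kneser graph must be brought in, in the same spirit as the clique/independent-set argument already used in the proof of Theorem~\ref{lem: symmetric}. Granting it, $M\in\mathscr{A}_\bullet$, hence $R\in\mathscr{A}_\bullet$, and Theorem~\ref{lem: symmetric} yields property $(G)$.
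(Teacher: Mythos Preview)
Your reduction to showing $R_T\in\mathscr{A}_4$ via $R=\GHZ_4+R_A+R_T$ and Theorem~\ref{lem: symmetric} is correct, but the plan you propose for that remaining step---rerunning the $X_n$ construction with $R_A$ (or $R_A+\GHZ_4$) inserted at every crossing---has a gap you yourself flag as ``the main obstacle'' and then simply grant. That gap is not cosmetic. Your $Y_n$ equals $X_n(\vec i)\prod_c A(i_{a_c},i_{b_c})$, so to get $Y_n=X_n=M$ you need every pair of ``near'' endpoints $(a_c,b_c)$ at every crossing to be adjacent (or equal). Already for $n=5$ this is in trouble: $KG_{5,2}$ is the triangle-free, non-planar Petersen graph, so any drawing has many mutually crossing edges, while three pairwise adjacent-or-equal vertices force a repetition. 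You give no argument that a drawing and a placement of the $A$-boxes can be chosen to satisfy all these constraints simultaneously, and there is no evident reason to expect one; as written the proof is incomplete.

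The paper takes an entirely different route and never needs to control crossings. It builds $R_T$ directly as (a nonzero scalar multiple of) an explicit planar element $Y\in\mathscr{A}_4$ assembled only from $\GHZ$, $A$ and $T$. First it defines clique gadgets $\gamma_k\in\mathscr{A}_{3k}$ recursively from $\GHZ$ and $A$, with $\gamma_k=\chi_{B_k}$ recording $k$-tuples that span a $k$-clique. Then, with separate diagrams for $n$ odd and $n$ even (and $m=\lfloor(n-3)/2\rfloor$), it sandwiches $\gamma_m$ against $\gamma_m^*$ together with a few $A$- and $T$-edges and evaluates the result on $(v_1,v_2,v_3,v_4)$ by hand: the clique forces a near-matching of $\{4,\dots,n\}$, which together with the $T$-constraints pins an auxiliary vertex $v_5$ (and $v_6$ when $n$ is even) and then forces $(v_3,v_4)=(v_1,v_2)$ whenever $T(v_1,v_2)=1$. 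This gives $Y=\frac{(n-3)!}{2^{m}}R_T$ (respectively $\frac{(n-3)!}{2^{m-1}}R_T$). Note that $R_A$ is not used at all in constructing $Y$; the hypothesis $R_A\in\mathscr{A}_4$ enters only at the very last line, to add $R_T+R_A+\GHZ_4=R$.
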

\begin{proof}
	$(\Rightarrow)$ It follows directly. 
	
	$(\Leftarrow)$ Suppose $R_A\in \mathscr{A}_\bullet$. By Lemma \ref{lem: symmetric} and Equation \ref{equ: decompostion of R}, we only need to show $R_T\in\mathscr{A}_4$. We define a sequence of elements $\gamma_k\in\mathscr{A}_{3k}$ recursively as follows:
	\begin{align}
	&\begin{tikzpicture}
	\draw (.5,.5)--(0,0)--(-.5,.5);
	\draw (0,0)--(0,-.5);
	\draw [fill=black] (0,0) circle[radius=.04];
	\node at (-.8,0) {$\gamma_1=$};
	\draw [fill=black] (-.5,.5) circle[radius=.04];
	\draw [fill=black] (0,-.5) circle[radius=.04];
	\draw [fill=black] (.5,.5) circle[radius=.04];
	\end{tikzpicture}.\\
	&\begin{tikzpicture}
	\node at (-1,0) {$\gamma_{k+1}=$};    
	\begin{scope}
	\draw (0,0)--(0,1);
	\draw (.4,0)--(.4,1);
	\draw (1,0)--(1,1);
	\draw (1.4,0)--(1.4,1);
	\draw [fill=black] (0,1) circle[radius=.04];
	\draw [fill=black] (.4,1) circle[radius=.04];
	\draw [fill=black] (1,1) circle[radius=.04];
	\draw [fill=black] (1.4,1) circle[radius=.04];
	\node at (.7,.2) {$\cdots$};
	\end{scope}
	\begin{scope}[shift={(2.2,0)}]
	\draw (0,0)--(0,1);
	\draw (.2,.6)--(0,.4);
	\draw (.4,0)--(.4,1);
	\draw (1,0)--(1,1);
	\draw (1.2,.6)--(1,.4);
	\draw (1.4,0)--(1.4,1);
	\draw (1.6,.6)--(1.4,.4);
	\node at (.7,.2) {$\cdots$};
	\draw [fill=black] (0,.4) circle [radius=.04];
	\draw [fill=black] (1,.4) circle [radius=.04];
	\draw [fill=black] (1.4,.4) circle [radius=.04];
	\draw [fill=black] (.2,.6) circle [radius=.04];
	\draw [fill=black] (1.2,.6) circle [radius=.04];
	\draw [fill=black] (1.6,.6) circle [radius=.04];
	\draw [red] (.1,.5) circle[radius=.04];
	\draw [red] (1.1,.5) circle[radius=.04];
	\draw [red] (1.5,.5) circle[radius=.04];
	\draw [fill=black] (0,1) circle[radius=.04];
	\draw [fill=black] (.4,1) circle[radius=.04];
	\draw [fill=black] (1,1) circle[radius=.04];
	\draw [fill=black] (1.4,1) circle[radius=.04];
	\end{scope}
	\begin{scope}[shift={(1.1,-1.5)}]
	\draw (0,0)--(0,1);
	\draw (.4,0)--(.4,1);
	\draw (1,0)--(1,1);
	\draw (1.4,0)--(1.4,1);
	\node at (.7,.7) {$\cdots$};
	\draw [fill=black] (0,0) circle[radius=.04];
	\draw [fill=black] (.4,0) circle[radius=.04];
	\draw [fill=black] (1,0) circle[radius=.04];
	\draw [fill=black] (1.4,0) circle[radius=.04];
	\end{scope}
	\draw (1.8,1)--(1.8,.8) arc[radius=.2, start angle=180, end angle=270]--(4,.6);
	\draw (4,1)--(4,-1.5);    
	\draw [fill=black] (4,.6) circle [radius=.04];
	\draw [fill=white] (-.1,0) rectangle (3.7,-.5);
	\node at (1.8,-.25) {$\gamma_k$};
	\node at (-.2,-.25) {$\$$};
	\draw [fill=black] (4,1) circle[radius=.04];
	\draw [fill=black] (4,-1.5) circle[radius=.04];
	\draw [fill=black] (1,1) circle[radius=.04];
	\end{tikzpicture}.
	\end{align}
	\begin{claim}
		Let $B_k=\{(i_1,i_2,\cdots,i_k,i_1,i_2,\cdots,i_k,i_k,i_{k-1},\cdots,i_1):S(i_s,i_t)=1~\forall 1\leq s\leq t\leq k\}$. Then we have that $\gamma_k=\chi_{B_k}$. 
	\end{claim}
	\begin{proof}[Proof of Claim]
		We prove this by induction on $k$. It is straightforward to see that the claim is true when $k=1$. Now assume it is true for $k$. Let $(i_1,i_2,\cdots, i_{k+1},j_1,j_2,\cdots,j_{k+1},m_{k+1},m_k,\cdots,m_1)\in V^{3k+3}$. We denote 
		\begin{align}
		&\gamma_{k+1}(i_1,i_2,\cdots, i_{k+1},j_1,j_2,\cdots,j_{k+1},m_{k+1},m_k,\cdots,m_1)\label{equ: indcution}\\
		=&\gamma_k(i_1,\cdots,i_k,j_1,\cdots,j_k,m_k,m_{k-1},\cdots,m_1)\delta_{i_{k+1},j_{k+1}}\delta_{i_{k+1},m_{k+1}}\prod_{t=1}^k S(i_{k+1},j_t)\\
		=&\left(\prod_{t=1}^k \delta_{i_t,m_t}\delta_{i_t,m_t}\right)\left(\prod_{1\leq s\leq t\leq k} S(i_s,i_t)\right)    \delta_{i_{k+1},j_{k+1}}\delta_{i_{k+1},m_{k+1}}\prod_{t=1}^k S(i_{k+1},j_t)\\
		=&\left(\prod_{t=1}^{k+1} \delta_{i_t,m_t}\delta_{i_t,m_t}\right)\left(\prod_{1\leq s\leq t\leq k+1} S(i_s,i_t)\right)\\
		=&\chi_{B_{k+1}}.
		\end{align}
		This proves the claim.
	\end{proof}
	Now we construct $R_T$ explicitly in the following two cases:
	\begin{enumerate}
		\item Suppose $n$ is odd. Let $m=\displaystyle\frac{n-3}2$. Consider the following element $Y$ in $\mathscr{A}_{4}$. 
		\begin{figure}[H]
			\begin{tikzpicture}
			\draw (.2,0)--(.2,1);
			\draw (.4,0)--(.4,1);
			\draw (1.2,0)--(1.2,1);
			\draw (1,0)--(1,1);
			\draw [fill=black] (.2,0) circle [radius=.04];
			\draw [fill=black] (.2,1) circle [radius=.04];
			\draw [fill=black] (.4,0) circle [radius=.04];
			\draw [fill=black] (.4,1) circle [radius=.04];
			\draw [fill=black] (1.2,0) circle [radius=.04];
			\draw [fill=black] (1.2,1) circle [radius=.04];
			\draw [fill=black] (1,0) circle [radius=.04];
			\draw [fill=black] (1,1) circle [radius=.04];
			\draw [dotted, thick] (.5,.75)--(.9,.75);
			\draw (.2,1) arc [radius=.5, start angle=180, end angle=0];
			\draw (.4,1)--(.4,1.2) arc[radius=.3, start angle=180, end angle=0]--(1,1);
			\draw [fill=black] (.7,1.5) circle [radius=.04];
			\draw [red] (.27,1.25) circle [radius=.04];
			\draw [red] (.41,1.25) circle [radius=.04];
			\draw [red] (.99,1.25) circle [radius=.04];
			\draw [red] (1.13,1.25) circle [radius=.04];
			\draw (2+.2,0)--(2+.2,1);
			\draw (2+.4,0)--(2+.4,1);
			\draw (2+1.2,0)--(2+1.2,1);
			\draw (2+1,0)--(2+1,1);
			\draw [fill=black] (2+.2,0) circle [radius=.04];
			\draw [fill=black] (2+.2,1) circle [radius=.04];
			\draw [fill=black] (2+.4,0) circle [radius=.04];
			\draw [fill=black] (2+.4,1) circle [radius=.04];
			\draw [fill=black] (2+1.2,0) circle [radius=.04];
			\draw [fill=black] (2+1.2,1) circle [radius=.04];
			\draw [fill=black] (2+1,0) circle [radius=.04];
			\draw [fill=black] (2+1,1) circle [radius=.04];
			\draw [dotted, thick] (2+.5,.75)--(2+.9,.75);
			\draw (2+.2,1) arc [radius=.5, start angle=180, end angle=0];
			\draw (2+.4,1)--(2+.4,1.2) arc[radius=.3, start angle=180, end angle=0]--(2+1,1);
			\draw [fill=black] (2+.7,1.5) circle [radius=.04];
			\draw [red] (2+.27,1.25) circle [radius=.04];
			\draw [red] (2+.41,1.25) circle [radius=.04];
			\draw [red] (2+.99,1.25) circle [radius=.04];
			\draw [red] (2+1.13,1.25) circle [radius=.04];
			\draw (.7,1.5)--(1.7,1.5);
			\draw [cyan](1.7,1.5)--(2.7,1.5);
			\draw [cyan](1.7,1.5)--(1.7,1);
			\draw [fill=black] (1.7,1) circle[radius=.04];
			\draw [fill=black] (1.7,1.5) circle [radius=.04];
			\draw [ultra thick, cyan] (1.65,1.25)--(1.75,1.25);
			\draw [ultra thick, cyan] (2.2,1.45)--(2.2,1.55);
			\draw (1.7,1) arc[radius=.5, start angle=180, end angle=270]--(3.7,.5);
			\draw [fill=black](2.3,.5) circle [radius=.04];
			\draw [fill=black] (2.5,.5) circle [radius=.04];
			\draw [fill=black] (3.1,.5) circle[radius=.04];
			\draw [fill=black] (3.3,.5) circle [radius=.04];
			\draw [fill=black] (3.7,.5) circle [radius=.04];
			\draw (2.2,0)--(2.3,.5);
			\draw [red] (2.25,.25) circle [radius=.04];
			\draw (2.4,0)--(2.5,.5);
			\draw [red] (2.45,.25) circle [radius=.04];
			\draw (3.1,.5)--(3,0);
			\draw [red] (3.05,.25) circle [radius=.04];
			\draw (3.2,0)--(3.3,.5);
			\draw [red] (3.25,.25) circle [radius=.04];
			\draw [cyan](2.7,1.5)--(3.5,1.5) arc[radius=.2, start angle=90, end angle=0]--(3.7,.5);
			\draw [ultra thick,cyan] (3.75,.9)--(3.65,.9);
			\draw (.2,-2-0)--(.2,-2-1);
			\draw (.4,-2-0)--(.4,-2-1);
			\draw (1.2,-2-0)--(1.2,-2-1);
			\draw (1,-2-0)--(1,-2-1);
			\draw [fill=black] (.2,-2-0) circle [radius=.04];
			\draw [fill=black] (.2,-2-1) circle [radius=.04];
			\draw [fill=black] (.4,-2-0) circle [radius=.04];
			\draw [fill=black] (.4,-2-1) circle [radius=.04];
			\draw [fill=black] (1.2,-2-0) circle [radius=.04];
			\draw [fill=black] (1.2,-2-1) circle [radius=.04];
			\draw [fill=black] (1,-2-0) circle [radius=.04];
			\draw [fill=black] (1,-2-1) circle [radius=.04];
			\draw [dotted,thick] (.5,-2-.75)--(.9,-2-.75);
			\draw (.2,-2-1) arc [radius=.5,start angle=180, end angle=360];
			\draw (.4,-2-1)--(.4,-2-1.2) arc[radius=.3,start angle=180,end angle=360]--(1,-2-1);
			\draw [fill=black] (.7,-2-1.5) circle [radius=.04];
			\draw [red] (.27,-2-1.25) circle [radius=.04];
			\draw [red] (.41,-2-1.25) circle [radius=.04];
			\draw [red] (.99,-2-1.25) circle [radius=.04];
			\draw [red] (1.13,-2-1.25) circle [radius=.04];
			\draw (2+.2,-2-0)--(2+.2,-2-1);
			\draw (2+.4,-2-0)--(2+.4,-2-1);
			\draw (2+1.2,-2-0)--(2+1.2,-2-1);
			\draw (2+1,-2-0)--(2+1,-2-1);
			\draw [fill=black] (2+.2,-2-0) circle [radius=.04];
			\draw [fill=black] (2+.2,-2-1) circle [radius=.04];
			\draw [fill=black] (2+.4,-2-0) circle [radius=.04];
			\draw [fill=black] (2+.4,-2-1) circle [radius=.04];
			\draw [fill=black] (2+1.2,-2-0) circle [radius=.04];
			\draw [fill=black] (2+1.2,-2-1) circle [radius=.04];
			\draw [fill=black] (2+1,-2-0) circle [radius=.04];
			\draw [fill=black] (2+1,-2-1) circle [radius=.04];
			\draw [dotted,thick] (2+.5,-2-.75)--(2+.9,-2-.75);
			\draw (2+.2,-2-1) arc [radius=.5,start angle=180,end angle=360];
			\draw (2+.4,-2-1)--(2+.4,-2-1.2) arc[radius=.3,start angle=180,end angle=360]--(2+1,-2-1);
			\draw [fill=black] (2+.7,-2-1.5) circle [radius=.04];
			\draw [red] (2+.27,-2-1.25) circle [radius=.04];
			\draw [red] (2+.41,-2-1.25) circle [radius=.04];
			\draw [red] (2+.99,-2-1.25) circle [radius=.04];
			\draw [red] (2+1.13,-2-1.25) circle [radius=.04];
			\draw (.7,-2-1.5)--(1.7,-2-1.5);
			\draw [cyan](1.7,-2-1.5)--(2.7,-2-1.5);
			\draw [cyan](1.7,-2-1.5)--(1.7,-2-1);
			\draw [fill=black] (1.7,-2-1) circle[radius=.04];
			\draw [fill=black] (1.7,-2-1.5) circle [radius=.04];
			\draw [ultra thick,cyan] (1.65,-2-1.25)--(1.75,-2-1.25);
			\draw [ultra thick,cyan] (2.2,-2-1.45)--(2.2,-2-1.55);
			\draw (1.7,-2-1) arc[radius=.5,start angle=180,end angle=90]--(3.7,-2-.5);
			\draw [fill=black](2.3,-2-.5) circle [radius=.04];
			\draw [fill=black] (2.5,-2-.5) circle [radius=.04];
			\draw [fill=black] (3.1,-2-.5) circle[radius=.04];
			\draw [fill=black] (3.3,-2-.5) circle [radius=.04];
			\draw [fill=black] (3.7,-2-.5) circle [radius=.04];
			\draw (2.2,-2-0)--(2.3,-2-.5);
			\draw [red] (2.25,-2-.25) circle [radius=.04];
			\draw (2.4,-2-0)--(2.5,-2-.5);
			\draw [red] (2.45,-2-.25) circle [radius=.04];
			\draw (3.1,-2-.5)--(3,-2-0);
			\draw [red] (3.05,-2-.25) circle [radius=.04];
			\draw (3.2,-2-0)--(3.3,-2-.5);
			\draw [red] (3.25,-2-.25) circle [radius=.04];
			\draw [cyan](2.7,-2-1.5)--(3.5,-2-1.5) arc[radius=.2,start angle=270,end angle=360]--(3.7,-2-.5);
			\draw [ultra thick,cyan] (3.75,-2-.9)--(3.65,-2-.9);
			\draw  (0,0) rectangle (3.4,-.75);
			\node at (-.1,-.375) {$\$$};
			\draw  (0,-2) rectangle (3.4,-1.25);
			\node at (-.1,-1.625) {$\$$};
			\node at (1.7,-0.375) {$\gamma_m$};
			\node at (1.7,-1.25-0.375) {$\gamma_m^*$};
			\draw (1.4,-.75)--(1.4,-1.25);
			\draw (1.2,-.75)--(1.2,-1.25);
			\draw (2,-.75)--(2,-1.25);
			\draw (2.2,-.75)--(2.2,-1.25);
			\draw [fill=black] (1.4,-.75) circle [radius=.04];
			\draw [fill=black] (1.2,-.75) circle [radius=.04];
			\draw [fill=black] (2.2,-.75) circle [radius=.04];
			\draw [fill=black] (2,-.75) circle [radius=.04];
			\draw [fill=black] (2,-1.25) circle [radius=.04];
			\draw [fill=black] (2.2,-1.25) circle [radius=.04];
			\draw [fill=black] (1.2,-1.25) circle [radius=.04];
			\draw [fill=black] (1.4,-1.25) circle [radius=.04];
			\draw [thick, dotted] (1.45,-1)--(1.95,-1);
			\draw [cyan] (.7,1.5)--(-.5,1.5) arc [radius=.2, start angle=90, end angle=180]--(-.7,-2-1.3) arc[radius=.2, start angle=180, end angle=270]--(.7,-2-1.5);
			\draw [ultra thick, cyan](-.65,-1)--(-.75,-1);
			\draw (3.7,.5)--(3.7,-2-.5);
			\draw (.7,1.5)--(.7,2);
			\draw (2.7,1.5)--(2.7,2);
			\draw (.7,-2-1.5)--(.7,-2-2);
			\draw (2.7,-2-1.5)--(2.7,-2-2);
			\draw [fill=black] (.7,2) circle[radius=.04];
			\draw [fill=black] (2.7,2) circle[radius=.04];
			\draw [fill=black] (2.7,-4) circle[radius=.04];
			\draw [fill=black] (.7,-4) circle[radius=.04];
			\node at (3.8,.5) [right] {$v_5$};
			\node at (3.8,-2-.5) [right] {$v_5$};
			\node at (.7,2) [above] {$v_1$};
			\node at (2.7,2) [above] {$v_2$};
			\node at (2.7,-4) [below] {$v_3$};
			\node at (.7,-4) [below] {$v_4$};
			\end{tikzpicture}
		\end{figure}
		Now we evaluate $Y$ on an arbitrary point $(v_1,v_2,v_3,v_4)\in V^4$. By definition, we have that 
		\begin{align}\label{equ: Eva of Y odd}
		\displaystyle\frac{Y(v_1,v_2,v_3,v_4)}{T(v_1,v_2)T(v_3,v_4)T(v_1,v_4)}=\sum_{v_5\in V;\vec{i}\in V^m}\left(\prod_{t=1}^4 T(v_t,v_5)\right)\left(\prod_{1\leq s\leq m,1\leq t\leq 5} S(i_s,v_t)  \right).
		\end{align}
		Note that $Y$ is invariant under the action $\beta$ of $S_n$. One can assume $v_1=\{1,2\}$ and $v_2=\{1,3\}$ without loss of generality. By definition of $\gamma_m$, we know that 
		\begin{equation}
		\bigsqcup_{t=1}^m i_t= \{4,5,\cdots,n\}. 
		\end{equation} 
		This implies that $v_5=\{2,3\}$ and $v_4,v_3\subset \{1,2,3\}$. Since there are terms $T(v_3,v_5)$, $T(v_4,v_5)$ and $T(v_1,v_2)T(v_3,v_4)T(v_1,v_4)$, we know that 
		$v_3=\{1,2\}$ and $v_4=\{1,3\}$ if $Y(v_1,v_2,v_3,v_4)\neq 0$. Moreover, by counting the possible choice of $\vec{i}$, we know that 
		\begin{equation}
		Y(v_1,v_2,v_3,v_4)=\frac{(n-3)!}{2^m} \delta_{v_1,v_3}\delta_{v_2,v_4} T(v_1,v_2),
		\end{equation}
		namely, $Y=\displaystyle\frac{(n-3)!}{2^m} R_T$. 
		\item Suppose $n$ is even. Let $m=\displaystyle\frac{n-4}{2}$. Consider the following element $Y$ in $\mathscr{A}_4$.  
		\begin{figure}[H]
			\begin{tikzpicture}
			\draw (0,1)--(0,2) arc[radius=.7, start angle=180, end angle=0]--(1.4,1);
			\draw (.4,1)--(.4,2.4) arc[radius=.3, start angle=180, end angle=0]--(1,1);
			\draw (.2,2)--(.2,2.2) arc[radius=.5, start angle=180, end angle=0]--(1.2,2);
			\draw [red](.2,2.2) circle [radius=.04];
			\draw [red](1.2,2.2) circle [radius=.04];
			\draw (.2,2)--(-.2,2);
			\draw (.2,2)--(.2,1.2)--(1.2,1.2)--(1.2,2);
			\draw [red] (0,1.6) circle [radius=.04];
			\draw [red] (.4,1.6) circle [radius=.04];
			\draw [red] (1,1.6) circle [radius=.04];
			\draw [red] (1.4,1.6) circle [radius=.04];
			\draw [fill=black] (.7,2.7) circle [radius=.04];
			\draw [fill=black] (0,1.8) circle [radius=.04];
			\draw [fill=black] (1.4,1.8) circle [radius=.04];
			\draw [fill=black] (0,1.4) circle [radius=.04];
			\draw [fill=black] (.4,1.4) circle [radius=.04];
			\draw [fill=black] (1,1.4) circle [radius=.04];
			\draw [fill=black] (1.4,1.4) circle [radius=.04];
			\draw [fill=black] (.2,2) circle [radius=.04];
			\draw [fill=black] (1.2,2) circle [radius=.04];
			\draw [fill=black] (.2,1.2) circle [radius=.04];
			\draw [fill=black] (1.2,1.2) circle [radius=.04];
			\draw [fill=black] (.6,1.2) circle [radius=.04];
			\draw (.2,1.2)--(0,1);
			\draw (.6,1.2)--(.4,1);
			\draw (1.2,1.2)--(1,1);
			\draw [red] (.1,1.1) circle [radius=.04];
			\draw [red] (.5,1.1) circle [radius=.04];
			\draw [red] (1.1,1.1) circle [radius=.04];
			\draw [cyan](1.2,1.2)--(1.4,1);
			\draw [cyan, ultra thick] (1.3-.05,1.1-.05)--(1.3+.05,1.1+.05);
			\draw (1.2,2)--(2.4,2);
			\draw [fill=black](2,2) circle [radius=.04];
			\draw [fill=black] (2.4,2) circle [radius=.04];
			\draw [red] (2.2,2) circle [radius=.04];
			\draw [fill=black](2,2.4) circle [radius=.04];
			\draw [fill=black] (2.4,2.4) circle [radius=.04];
			\draw (2,2.4)--(2.4,2);
			\draw (2.4,2.4)--(2,2);
			\draw [cyan] (2,2.4)--(2.4,2.7);
			\draw (.7,2.7)--(2,2.7);
			\draw [cyan] (2,2.4)--(2,2.7)--(2.4,2.7);
			\draw [ultra thick, cyan] (1.95,2.55)--(2.05,2.55);
			\draw [ultra thick, cyan] (2.2,2.65)--(2.2,2.75);
			\draw [ultra thick, cyan] (2.2-.03,2.55+.04)--(2.2+.03,2.55-.04);
			\draw [red] (2.4,2.55) circle [radius=.04];
			\draw (2.4,2.7)--(2.4,2.4);
			\draw [fill=black] (2.4,2.7)circle [radius=.04];
			\draw [fill=black] (2,2.7)circle [radius=.04];
			\draw (2.4,2.7)--(3.7,2.7);
			\draw [fill=black] (3.7,2.7) circle [radius=.04];
			\draw (3+0,1)--(3+0,2) arc[radius=.7, start angle=180, end angle=0]--(3+1.4,1);
			\draw (3+.4,1)--(3+.4,2.4) arc[radius=.3, start angle=180, end angle=0]--(3+1,1);
			\draw (2.4,2)--(2.4,1.8) arc[radius=.4,start angle=180, end angle=270]--(4.6,1.4);
			\draw (3.2,1.4)--(3,1);
			\draw (3.6,1.4)--(3.4,1);
			\draw (4.2,1.4)--(4,1);
			\draw (4.6,1.4)--(4.4,1);
			\draw [fill=black](3.2,1.4) circle [radius=.04];
			\draw [fill=black](3.6,1.4) circle [radius=.04];
			\draw [fill=black](4.2,1.4) circle [radius=.04];
			\draw [fill=black](4.6,1.4) circle [radius=.04];
			\draw [fill=black](3,1) circle [radius=.04];
			\draw [fill=black](3.4,1) circle [radius=.04];
			\draw [fill=black](4,1) circle [radius=.04];
			\draw [fill=black](4.4,1) circle [radius=.04];
			\draw [fill=black](3,1.6) circle [radius=.04];
			\draw [fill=black](3.4,1.6) circle [radius=.04];
			\draw [fill=black](4,1.6) circle [radius=.04];
			\draw [fill=black](4.4,1.6) circle [radius=.04];
			\draw [red] (3.1,1.2) circle [radius=.04];
			\draw [red] (3.5,1.2) circle [radius=.04];
			\draw [red] (4.1,1.2) circle [radius=.04];
			\draw [red] (4.5,1.2) circle [radius=.04];
			\draw [red] (3,1.8) circle [radius=.04];
			\draw [red] (3.4,1.8) circle [radius=.04];
			\draw [red] (4,1.8) circle [radius=.04];
			\draw [red] (4.4,1.8) circle [radius=.04];
			\draw (0,-1)--(0,-2) arc[radius=.7,start angle=180,end angle=360]--(1.4,-1);
			\draw (.4,-1)--(.4,-2.4) arc[radius=.3,start angle=180,end angle=360]--(1,-1);
			\draw (.2,-2)--(.2,-2.2) arc[radius=.5,start angle=180,end angle=360]--(1.2,-2);
			\draw [red](.2,-2.2) circle [radius=.04];
			\draw [red](1.2,-2.2) circle [radius=.04];
			\draw (.2,-2)--(-.2,-2);
			\draw (.2,-2)--(.2,-1.2)--(1.2,-1.2)--(1.2,-2);
			\draw [red] (0,-1.6) circle [radius=.04];
			\draw [red] (.4,-1.6) circle [radius=.04];
			\draw [red] (1,-1.6) circle [radius=.04];
			\draw [red] (1.4,-1.6) circle [radius=.04];
			\draw [fill=black] (.7,-2.7) circle [radius=.04];
			\draw [fill=black] (0,-1.8) circle [radius=.04];
			\draw [fill=black] (1.4,-1.8) circle [radius=.04];
			\draw [fill=black] (0,-1.4) circle [radius=.04];
			\draw [fill=black] (.4,-1.4) circle [radius=.04];
			\draw [fill=black] (1,-1.4) circle [radius=.04];
			\draw [fill=black] (1.4,-1.4) circle [radius=.04];
			\draw [fill=black] (.2,-2) circle [radius=.04];
			\draw [fill=black] (1.2,-2) circle [radius=.04];
			\draw [fill=black] (.2,-1.2) circle [radius=.04];
			\draw [fill=black] (1.2,-1.2) circle [radius=.04];
			\draw [fill=black] (.6,-1.2) circle [radius=.04];
			\draw (.2,-1.2)--(0,-1);
			\draw (.6,-1.2)--(.4,-1);
			\draw (1.2,-1.2)--(1,-1);
			\draw [red] (.1,-1.1) circle [radius=.04];
			\draw [red] (.5,-1.1) circle [radius=.04];
			\draw [red] (1.1,-1.1) circle [radius=.04];
			\draw [cyan](1.2,-1.2)--(1.4,-1);
			\draw [cyan, ultra thick] (1.3-.05,-1.1-.05)--(1.3+.05,-1.1+.05);
			\draw (1.2,-2)--(2.4,-2);
			\draw [fill=black](2,-2) circle [radius=.04];
			\draw [fill=black] (2.4,-2) circle [radius=.04];
			\draw [red] (2.2,-2) circle [radius=.04];
			\draw [fill=black](2,-2.4) circle [radius=.04];
			\draw [fill=black] (2.4,-2.4) circle [radius=.04];
			\draw (2,-2.4)--(2.4,-2);
			\draw (2.4,-2.4)--(2,-2);
			\draw [cyan] (2,-2.4)--(2.4,-2.7);
			\draw (.7,-2.7)--(2,-2.7);
			\draw [cyan] (2,-2.4)--(2,-2.7)--(2.4,-2.7);
			\draw [ultra thick,cyan] (1.95,-2.55)--(2.05,-2.55);
			\draw [ultra thick,cyan] (2.2,-2.65)--(2.2,-2.75);
			\draw [ultra thick,cyan] (2.2-.03,-2.55+.04)--(2.2+.03,-2.55-.04);
			\draw [red] (2.4,-2.55) circle [radius=.04];
			\draw (2.4,-2.7)--(2.4,-2.4);
			\draw [fill=black] (2.4,-2.7)circle [radius=.04];
			\draw [fill=black] (2,-2.7)circle [radius=.04];
			\draw (2.4,-2.7)--(3.7,-2.7);
			\draw [fill=black] (3.7,-2.7) circle [radius=.04];
			\draw (3+0,-1)--(3+0,-2) arc[radius=.7,start angle=180,end angle=360]--(3+1.4,-1);
			\draw (3+.4,-1)--(3+.4,-2.4) arc[radius=.3,start angle=180,end angle=360]--(3+1,-1);
			\draw (2.4,-2)--(2.4,-1.8) arc[radius=.4,start angle=180,end angle=90]--(4.6,-1.4);
			\draw (3.2,-1.4)--(3,-1);
			\draw (3.6,-1.4)--(3.4,-1);
			\draw (4.2,-1.4)--(4,-1);
			\draw (4.6,-1.4)--(4.4,-1);
			\draw [fill=black](3.2,-1.4) circle [radius=.04];
			\draw [fill=black](3.6,-1.4) circle [radius=.04];
			\draw [fill=black](4.2,-1.4) circle [radius=.04];
			\draw [fill=black](4.6,-1.4) circle [radius=.04];
			\draw [fill=black](3,-1) circle [radius=.04];
			\draw [fill=black](3.4,-1) circle [radius=.04];
			\draw [fill=black](4,-1) circle [radius=.04];
			\draw [fill=black](4.4,-1) circle [radius=.04];
			\draw [fill=black](3,-1.6) circle [radius=.04];
			\draw [fill=black](3.4,-1.6) circle [radius=.04];
			\draw [fill=black](4,-1.6) circle [radius=.04];
			\draw [fill=black](4.4,-1.6) circle [radius=.04];
			\draw [red] (3.1,-1.2) circle [radius=.04];
			\draw [red] (3.5,-1.2) circle [radius=.04];
			\draw [red] (4.1,-1.2) circle [radius=.04];
			\draw [red] (4.5,-1.2) circle [radius=.04];
			\draw [red] (3,-1.8) circle [radius=.04];
			\draw [red] (3.4,-1.8) circle [radius=.04];
			\draw [red] (4,-1.8) circle [radius=.04];
			\draw [red] (4.4,-1.8) circle [radius=.04];
			\draw (-.2,1) rectangle (4.6,.25);
			\node at (-.3,.625) {$\$$};
			\draw (-.2,-1) rectangle (4.6,-.25);
			\node at (-.3,-.625) {$\$$};
			\node at (2.2,.6) {$\gamma_m$};
			\node at (2.2,-.6) {$\gamma_m^*$};
			\draw [fill=black] (2.5,.25) circle [radius=.04];
			\draw [fill=black](2.9,.25) circle [radius=.04];
			\draw [fill=black] (2.1,.25) circle [radius=.04]; 
			\draw [fill=black] (1.7,.25) circle [radius=.04];
			\draw [fill=black] (2.5,-.25) circle [radius=.04];
			\draw [fill=black](2.9,-.25) circle [radius=.04];
			\draw [fill=black] (2.1,-.25) circle [radius=.04]; 
			\draw [fill=black] (1.7,-.25) circle [radius=.04];
			\draw (2.5,.25)--(2.5,-.25);
			\draw (2.9,.25)--(2.9,-.25);
			\draw (2.1,.25)--(2.1,-.25);
			\draw (1.7,.25)--(1.7,-.25);
			\draw (-.2,2) arc[radius=.2, start angle=90, end angle=180]--(-.4,-1.8) arc[radius=.2, start angle=180, end angle=270];
			\draw (4.6,1.4) arc[radius=.2, start angle=90, end angle=0]--(4.8,-1.2) arc[radius=.2, start angle=0, end angle=-90];
			\draw (.7,2.7)--(.7,3.2);
			\draw (3.7,2.7)--(3.7,3.2);
			\draw (.7,-2.7)--(.7,-3.2);
			\draw (3.7,-2.7)--(3.7,-3.2);
			\draw [fill=black](.7,2.7) circle [radius=.04];
			\draw [fill=black](.7,3.2) circle [radius=.04];
			\draw [fill=black](3.7,2.7)circle [radius=.04];
			\draw [fill=black](3.7,3.2)circle [radius=.04];;
			\draw [fill=black](.7,-2.7)circle [radius=.04];
			\draw [fill=black](.7,-3.2)circle [radius=.04];
			\draw [fill=black](3.7,-2.7)circle [radius=.04];
			\draw [fill=black](3.7,-3.2)circle [radius=.04];
			\node at (.7,3.2) [above] {$v_1$};
			\node at (3.7,3.2) [above] {$v_2$};
			\node at (.7,-3.2) [below] {$v_3$};
			\node at (3.7,-3.2) [below] {$v_4$};
			\node at (-.4,2) [left,above] {$v_5$};
			\node at (-.4,-2)[right,below] {$v_5$};
			\node at (2.4,2) [right] {$v_6$};
			\node at (2.4,-2) [right] {$v_6$};
			\draw [cyan] (.7,2.7)--(-.6,2.7) arc[radius=.2, start angle=90, end angle=180]--(-.8,-2.5) arc[radius=.2, start angle=180, end angle=270]--(.7,-2.7);
			\draw [ultra thick, cyan] (-.85,0)--(-.75,0);
			\end{tikzpicture}
		\end{figure}
		Now we evaluate $Y$ on an arbitrary point $(v_1,v_2,v_3,v_4)\in V^4$. By definition, we have that
		\begin{equation}
		\displaystyle\frac{Y(v_1,v_2,v_3,v_4)}{T(v_1,v_2)T(v_3,v_4)T(v_1,v_4)}=\sum_{v_5,v_6\in V;\vec{i}\in V^m}\left(\prod_{j=1}^2S(v_5,v_j)T(v_6,v_j)\right)\displaystyle\frac{S(v_5,v_6)T(v_5,i_m)}{S(v_5,i_m)}\left(\prod_{t=1}^{m}S(v_5,i_t)S(v_6,i_t)\right)
		\end{equation}
		Without loss of generality, one can assume that $v_1=\{1,2\}$ and $v_2=\{1,3\}$. By definition of $\gamma_m$, we know that 
		\begin{equation}
		\left(\displaystyle\bigsqcup_{t=1}^m i_t\right)\bigcup v_5=\{4,5,\cdots,n\}.
		\end{equation}
		This implies that $v_6=\{2,3\}$. The rest of the computation is exactly similar to that in the previous case and thus $Y$ is a multiple of $R_T$. To be more precise, we have that
		\begin{equation}
		Y=\displaystyle\frac{(n-3)!}{2^{m-1}}\delta_{v_1,v_3}\delta_{v_2,v_4} T(v_1,v_2)=\displaystyle\frac{(n-3)!}{2^{m-1}}R_T. 
		\end{equation}
		
	\end{enumerate}
	Therefore, in both cases we have that
	\begin{equation}\label{equ: keySTeven}
	\begin{tikzpicture}
	\draw (0,0)--(1,1);
	\draw (0,1)--(1,0);
	\draw [fill=black] (0,0) circle[radius=.04];
	\draw [fill=black] (1,1) circle[radius=.04];
	\draw [fill=black] (1,0) circle[radius=.04];
	\draw [fill=black] (0,1) circle[radius=.04];
	\draw [cyan](.25,.75)--(.75,.75);
	\draw [ultra thick, cyan] (.5,.7)--(.5,.8); 
	\draw [fill=black](.25,.75) circle [radius=.04];
	\draw [fill=black](.75,.75) circle [radius=.04];
	\node at (1.75,.5) {$\in\mathscr{A}_{4,+}$};        
	\end{tikzpicture}.
	\end{equation}
	Combined with Equation \eqref{equ: keySX}, we know that the transposition $R$ belongs to $\mathscr{A}_\bullet$, since $R=R_T+R_A+\GHZ_4$. By Theorem \ref{lem: symmetric}, we have that $\mathscr{A}_\bullet=\mathscr{P}_\bullet^{S_n}$, namely, $KG_{n,2}$ has property $(G)$. 
\end{proof}
Now we return to prove the main theorem. 
\begin{theorem}
	The Kneser graph $KG_{n,2}$ has property $(G)$ for $n\geq 5$. 
\end{theorem}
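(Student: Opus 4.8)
The plan is to invoke Lemma~\ref{pro: reduce lemma}: it suffices to prove that the $\GHZ$ tensor and the decorated crossing $R_A$ both lie in $\mathscr{A}_\bullet$. The first requirement is essentially free. By the Remark following Proposition~\ref{pro: diagrammatic representation} the $\GHZ$ tensor, and more generally every $\GHZ_k$, is a Temperley--Lieb element; hence it lies in every planar subalgebra of the spin model, in particular in $\mathscr{A}_\bullet$. Thus the entire theorem reduces to exhibiting $R_A$ as a crossing-free planar combination of $I$, $J$, $A$ and $\GHZ$ tensors. Note that the transposition $R$ itself is precisely the \emph{crossing} pairing, which is not a Temperley--Lieb element, so no crossing is allowed in the construction; this is exactly what makes $R_A\in\mathscr{A}_4$ a nontrivial statement rather than a formality.

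The plan for $R_A$ is to write down an explicit crossing-free $4$-box $Z\in\mathscr{A}_4$ and to show $Z=c\,R_A$ for a nonzero constant $c$. The combinatorial engine is the rigidity of $KG_{n,2}$: a clique is a partial matching of $[n]$, so if $v_1=\{a,b\}$ and every member of a clique is adjacent to $v_1$ then every member is a $2$-subset of $[n]\setminus v_1$; when $n$ is even a clique of size $m=(n-4)/2$ all of whose members are adjacent to both $v_1$ and $v_2$ is forced to be a perfect matching of $[n]\setminus(v_1\cup v_2)$, which can exist only if $v_1\sim v_2$, and in that case every vertex adjacent to all of its members is a $2$-subset of $v_1\cup v_2$. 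Accordingly, the diagram $Z$ consists of (i) three $A$-labelled boundary chords joining $v_1v_2$, $v_3v_4$ and $v_1v_4$; and (ii) a central, manifestly crossing-free gadget built from the recursive clique element $\gamma_m$ from the proof of Lemma~\ref{pro: reduce lemma} and its adjoint $\gamma_m^*$, whose open strands are tied through $A$-labelled edges and $\GHZ$ tensors to all four external legs, so that the internal summation ranges over ordered size-$m$ cliques each of whose vertices is adjacent to $v_1,v_2,v_3,v_4$. To evaluate $Z(v_1,v_2,v_3,v_4)$: the chord $A(v_1,v_2)$ annihilates the term unless $v_1\sim v_2$; granting $v_1\sim v_2$, the gadget forces $v_3,v_4\subseteq v_1\cup v_2$, whence $A(v_3,v_4)=1$ and $A(v_1,v_4)=1$ force $v_3=v_1$ and $v_4=v_2$; counting perfect matchings of an $(n-4)$-element set then gives $Z=\tfrac{(n-4)!}{2^{m}}R_A$ up to the normalization of $\gamma_m$. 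For $n$ odd one cannot exhaust the complement by a matching, so I would adjoin one auxiliary internal vertex absorbing the leftover element --- exactly the role played by $v_5$ in the odd-case construction of $R_T$ above --- and run the same argument with $m=(n-5)/2$. This yields $R_A\in\mathscr{A}_4$, and together with Lemma~\ref{pro: reduce lemma} the theorem follows for all $n\geq 5$.

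I expect the main obstacle to be the crossing-free requirement: since $KG_{n,2}$ is non-planar for $n\geq 5$, one cannot imitate the embed-the-whole-graph construction of $X_n$ in the proof of Theorem~\ref{lem: symmetric}, which used crossings freely. What makes the argument go through is that $R_A$ only needs \emph{local} pinning gadgets attached to the four external legs, not a planar drawing of the graph, and the recursive $\gamma_m$ is designed to be crossing-free for every $m$. The remaining, purely technical, work is the precise wiring of the $3m$ legs of $\gamma_m$ and $\gamma_m^*$ to the boundary, the verification that the resulting diagram is genuinely planar, and the slightly fussier even/odd evaluations --- all of which parallel the computations already carried out for $R_T$ in the proof of Lemma~\ref{pro: reduce lemma}.
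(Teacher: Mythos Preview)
Your argument is circular. The element $\gamma_m$ from the proof of Lemma~\ref{pro: reduce lemma} is \emph{not} crossing-free; it lies in $\mathscr{A}_\bullet$ only because that lemma assumes $R_A\in\mathscr{A}_\bullet$ as a hypothesis. Concretely, $\gamma_k=\chi_{B_k}$ forces the boundary legs $s,\ k{+}s,\ 3k{+}1{-}s$ to carry the same value for every $1\le s\le k$; already for $k=2$ the two triples $\{1,3,6\}$ and $\{2,4,5\}$ interlock on the boundary circle and cannot be joined by trivalent $\GHZ$ vertices in a planar way. In the recursion for $\gamma_{k+1}$ the new $i_{k+1}$-arm reaching the $\GHZ$ at position $2k{+}2$ must cross every $j_t$-strand, and the diagram is arranged so that each such crossing is accompanied by an $A$-edge---i.e.\ each crossing is an $R_A$. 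That is the whole point of the hypothesis in Lemma~\ref{pro: reduce lemma}. Using $\gamma_m$ to manufacture $R_A$ therefore assumes what you are trying to prove. The same obstruction kills any ``single planar gadget'' that encodes an internal $m$-clique adjacent to all four boundary legs: the clique condition draws a $K_m$ among the internal $\GHZ$ vertices, which is non-planar for $m\ge 5$, so for large $n$ no crossing-free diagram of your shape exists at all.

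The paper's proof takes a genuinely different route. Rather than seeking one planar diagram equal to a scalar multiple of $R_A$, it isolates the finite-dimensional subspace $Q\subset\mathscr{P}_4^{S_n}$ of vectors fixed by the ``surround by four $A$-edges'' tangle, observes $R_A\in Q$, and then exhibits nine explicit crossing-free $4$-box diagrams lying in $\mathscr{A}_4\cap Q$ (with small modifications for $n=6,7$). A direct computation of the pairing matrix against the orbit basis of $Q$ gives a nonvanishing determinant, so those diagrams span $Q$, whence $R_A\in\mathscr{A}_4$. The key difference is that the paper only needs $R_A$ to be a \emph{linear combination} of planar diagrams, not a single one; this sidesteps the planarity obstruction entirely at the cost of a (routine) rank computation.
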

\begin{proof}
	By Lemma \ref{pro: reduce lemma}, we need to show that 
	\begin{equation}
	\begin{tikzpicture}
	\draw (0,0)--(1,1);
	\draw (0,1)--(1,0);
	\draw (.25,.75)--(.75,.75);
	\draw [red] (.5,.75) circle [radius=.04];
	\draw [fill=black](.25,.75) circle [radius=.04];
	\draw [fill=black](.75,.75) circle [radius=.04];
	\node at (1.75,.5) {$\in\mathscr{A}_{4,+}$};
	\end{tikzpicture}.
	\end{equation}
	To show this, we investigate the subspace $Q\subset\mathscr{P}_4^{S_n}$ defined by the fixed points under the following tangle:
	\begin{figure}[H]
		\begin{tikzpicture}
		\draw (0,0) rectangle (1.2,1.2);
		\draw [fill=black] (0,0) circle [radius=.04];
		\draw [fill=black] (0,1.2) circle [radius=.04];
		\draw [fill=black] (1.2,1.2) circle [radius=.04];
		\draw [fill=black] (1.2,0) circle [radius=.04];
		\draw (0,0)--(1.2,1.2);
		\draw (0,1.2)--(1.2,0);
		\draw [fill=white] (.3,.45) rectangle (.9,.75);
		\node at (.2,.6) {$\$$};
		\draw [red] (0,.6) circle[radius=.04];
		\draw [red] (1.2,.6) circle[radius=.04];
		\draw [red] (.6,0) circle[radius=.04];
		\draw [red] (.6,1.2) circle[radius=.04];
		\draw [fill=black] (0,0) circle[radius=.04];
		\draw [fill=black] (1,1) circle[radius=.04];
		\draw [fill=black] (1,0) circle[radius=.04];
		\draw [fill=black] (0,1) circle[radius=.04];
		\end{tikzpicture}.
	\end{figure}
	The idea is to show that $\mathscr{Q}\subset\mathscr{A}_{4}$. This implies that $R_A\in\mathscr{A}_4$ and by Lemma \ref{pro: reduce lemma}, we have that the transposition $R\in\mathscr{A}_4$. We discuss this in two cases:
	\begin{enumerate}
		\item \textbf{General case:} Suppose $n\geq 8$. It follows that the subspace $Q$ is $9$-dimensional and has the following basis $\mathcal{B}$:
		\begin{align*}
		b_1&=[\{1,2\},\{3,4\},\{1,2\},\{3,4\}],\\
		b_2&=[\{1,2\},\{3,4\},\{1,2\},\{3,5\}],\\
		b_3&=[\{1,2\},\{3,4\},\{1,2\},\{5,6\}],\\
		b_4&=[\{1,2\},\{3,4\},\{1,5\},\{3,4\}],\\
		b_5&=[\{1,2\},\{3,4\},\{1,5\},\{3,6\}],\\
		b_6&=[\{1,2\},\{3,4\},\{1,5\},\{6,7\}],\\
		b_7&=[\{1,2\},\{3,4\},\{5,6\},\{3,4\}],\\
		b_8&=[\{1,2\},\{3,4\},\{5,6\},\{3,7\}],\\
		b_9&=[\{1,2\},\{3,4\},\{5,6\},\{7,8\}].
		\end{align*}
		Let $\mathscr{B}$ be the set of following elements in $\mathscr{A}_4\cap\mathscr{Q}$:
		\begin{figure}[H]\label{fig: diagrams}
			\begin{tikzpicture}
			\draw (1,1)--(0,0);
			\draw (0,1)--(1,-0);
			\draw [fill=white] (0,0)rectangle (1,1);
			\draw [red] (.5,1) circle [radius=.04];
			\draw [red] (0,.5) circle [radius=.04];
			\draw [red] (.5,0) circle [radius=.04];
			\draw [red] (1,.5) circle [radius=.04];
			\draw [fill=black] (0,0) circle [radius=.04];
			\draw [fill=black] (1,0) circle [radius=.04];
			\draw [fill=black] (0,1) circle [radius=.04];
			\draw [fill=black] (1,1) circle [radius=.04];
			\draw (2+1,1)--(2,0);    
			\draw [fill=white] (2+0,0) rectangle (2+1,1);
			\draw (2+0,1)--(2+1,0);
			\draw [red] (2+.5,1) circle [radius=.04];
			\draw [red] (2+0,.5) circle [radius=.04];
			\draw [red] (2+.5,0) circle [radius=.04];
			\draw [red] (2+1,.5) circle [radius=.04];
			\draw [fill=black] (2+0,0) circle [radius=.04];
			\draw [fill=black] (2+1,0) circle [radius=.04];
			\draw [fill=black] (2+0,1) circle [radius=.04];
			\draw [fill=black] (2+1,1) circle [radius=.04];
			\draw (4,1)--(4+1,0);
			\draw [fill=white] (4+0,0) rectangle (4+1,1);
			\draw (4+1,1)--(4,0);    
			\draw [red] (4+.5,1) circle [radius=.04];
			\draw [red] (4+0,.5) circle [radius=.04];
			\draw [red] (4+.5,0) circle [radius=.04];
			\draw [red] (4+1,.5) circle [radius=.04];
			\draw [fill=black] (4+0,0) circle [radius=.04];
			\draw [fill=black] (4+1,0) circle [radius=.04];
			\draw [fill=black] (4+0,1) circle [radius=.04];
			\draw [fill=black] (4+1,1) circle [radius=.04];
			\draw (6+1,1)--(6,0);    
			\draw [fill=white] (6+0,0) rectangle (6+1,1);
			\draw (6,1)--(6+1,0);
			\draw [red] (6+.5,1) circle [radius=.04];
			\draw [red] (6+0,.5) circle [radius=.04];
			\draw [red] (6+.5,0) circle [radius=.04];
			\draw [red] (6+1,.5) circle [radius=.04];
			\draw [fill=black] (6+0,0) circle [radius=.04];
			\draw [fill=black] (6+1,0) circle [radius=.04];
			\draw [fill=black] (6+0,1) circle [radius=.04];
			\draw [fill=black] (6+1,1) circle [radius=.04];
			\draw [red] (6+.5,.5) circle [radius=.04];
			\draw (8,1)--(8+1,0);
			\draw [fill=white] (8+0,0) rectangle (8+1,1);
			\draw (8+1,1)--(8,0);    
			\draw [red] (8+.5,1) circle [radius=.04];
			\draw [red] (8+0,.5) circle [radius=.04];
			\draw [red] (8+.5,0) circle [radius=.04];
			\draw [red] (8+1,.5) circle [radius=.04];
			\draw [fill=black] (8+0,0) circle [radius=.04];
			\draw [fill=black] (8+1,0) circle [radius=.04];
			\draw [fill=black] (8+0,1) circle [radius=.04];
			\draw [fill=black] (8+1,1) circle [radius=.04];
			\draw [red] (8+.5,.5) circle [radius=.04];
			\draw [fill=white] (0,0-2)rectangle (1,1-2);
			\draw (1,1-2)--(0,-2);
			\draw (0,1-2)--(1,-2);
			\draw [fill=black](.5,.5-2) circle [radius=.04];
			\draw [red] (.25,.75-2) circle [radius=.04];
			\draw [red] (.75,.75-2) circle [radius=.04];
			\draw [red] (.75,.25-2) circle [radius=.04];
			\draw [red] (.25,.25-2) circle [radius=.04];
			\draw [red] (.5,1-2) circle [radius=.04];
			\draw [red] (0,.5-2) circle [radius=.04];
			\draw [red] (.5,0-2) circle [radius=.04];
			\draw [red] (1,.5-2) circle [radius=.04];
			\draw [fill=black] (0,0-2) circle [radius=.04];
			\draw [fill=black] (1,0-2) circle [radius=.04];
			\draw [fill=black] (0,1-2) circle [radius=.04];
			\draw [fill=black] (1,1-2) circle [radius=.04];
			\draw (2+1,1-2)--(2,-2);
			\draw (2,1-2)--(2+1,-2);
			\draw [fill=white] (2+0,0-2)rectangle (2+1,1-2);
			\draw (2+0,1-2)--(2+.25,.5-2)--(2+.75,.5-2)--(2+1,1-2);
			\draw (2+0,0-2)--(2+.25,.5-2)--(2+.75,.5-2)--(2+1,0-2);
			\draw (2+0,1-2)--(2+.75,.5-2);
			\draw [red] (2+.375,.75-2) circle [radius=.04];
			\draw (2+1,0-2)--(2+.25,.5-2);
			\draw [red] (2+.625,.25-2) circle [radius=.04];
			\draw [red] (2+.125,.75-2) circle [radius=.04];
			\draw [red] (2+.125,.25-2) circle [radius=.04];
			\draw [red] (2+.5,.5-2) circle [radius=.04];
			\draw [red] (2+.875,.75-2) circle [radius=.04];
			\draw [red] (2+.875,.25-2) circle [radius=.04];
			\draw [fill=black](2+.25,.5-2) circle [radius=.04];
			\draw [fill=black](2+.75,.5-2) circle [radius=.04];
			\draw [fill=black] (2+0,0-2) circle [radius=.04];
			\draw [fill=black] (2+1,0-2) circle [radius=.04];
			\draw [fill=black] (2+0,1-2) circle [radius=.04];
			\draw [fill=black] (2+1,1-2) circle [radius=.04];
			\draw [red] (2+.5,1-2) circle [radius=.04];
			\draw [red] (2+.5,0-2) circle [radius=.04];
			\draw [red] (2+0,.5-2) circle [radius=.04];
			\draw [red] (2+1,.5-2) circle [radius=.04];
			\draw (4+1,1-2)--(4,-2);
			\draw (4,1-2)--(4+1,-2);
			\draw [fill=white] (4+0,0-2)rectangle (4+1,1-2);
			\draw (4+0,1-2)--(4+.25,.5-2)--(4+.75,.5-2)--(4+1,1-2);
			\draw (4+0,0-2)--(4+.25,.5-2)--(4+.75,.5-2)--(4+1,0-2);
			\draw (4+0,0-2)--(4+.75,.5-2);
			\draw [red] (4+.625,.75-2) circle [radius=.04];
			\draw (4+1,1-2)--(4+.25,.5-2);
			\draw [red] (4+.375,.25-2) circle [radius=.04];
			\draw [red] (4+.125,.75-2) circle [radius=.04];
			\draw [red] (4+.125,.25-2) circle [radius=.04];
			\draw [red] (4+.5,.5-2) circle [radius=.04];
			\draw [red] (4+.875,.75-2) circle [radius=.04];
			\draw [red] (4+.875,.25-2) circle [radius=.04];
			\draw [fill=black](4+.25,.5-2) circle [radius=.04];
			\draw [fill=black](4+.75,.5-2) circle [radius=.04];
			\draw [fill=black] (4+0,0-2) circle [radius=.04];
			\draw [fill=black] (4+1,0-2) circle [radius=.04];
			\draw [fill=black] (4+0,1-2) circle [radius=.04];
			\draw [fill=black] (4+1,1-2) circle [radius=.04];
			\draw [red] (4+.5,1-2) circle [radius=.04];
			\draw [red] (4+.5,0-2) circle [radius=.04];
			\draw [red] (4+0,.5-2) circle [radius=.04];
			\draw [red] (4+1,.5-2) circle [radius=.04];
			\draw (6+1,1-2)--(6,-2);
			\draw (6,1-2)--(6+1,-2);    
			\draw [fill=white] (6+0,0-2)rectangle (6+1,1-2);
			\draw (6+0,1-2)--(6+.5,.75-2)--(6+1,1-2);
			\draw (6+0,1-2)--(6+.25,.55-2)--(6+0,0-2);
			\draw (6+0,0-2)--(6+.5,.25-2)--(6+1,0-2);
			\draw (6+1,0-2)--(6+.75,.5-2)--(6+1,1-2);
			\draw (6+.25,.5-2)--(6+.5,.25-2)--(6+.75,.5-2)--(6+.5,.75-2)--(6+.25,.5-2);
			\draw [fill=black](6+.25,.5-2) circle [radius=.04];
			\draw [fill=black](6+.75,.5-2) circle [radius=.04];
			\draw [fill=black](6+.5,.75-2) circle [radius=.04];
			\draw [fill=black](6+.5,.25-2) circle [radius=.04];
			\draw [fill=black] (6+0,0-2) circle [radius=.04];
			\draw [fill=black] (6+1,0-2) circle [radius=.04];
			\draw [fill=black] (6+0,1-2) circle [radius=.04];
			\draw [fill=black] (6+1,1-2) circle [radius=.04]; 
			\draw [red] (6+.5,1-2) circle [radius=.04];
			\draw [red] (6+.5,0-2) circle [radius=.04];
			\draw [red] (6+0,.5-2) circle [radius=.04];
			\draw [red] (6+1,.5-2) circle [radius=.04];
			\draw [red] (6+.375,.625-2) circle [radius=.04];
			\draw [red] (6+.625,.625-2) circle [radius=.04];
			\draw [red] (6+.375,.375-2) circle [radius=.04];
			\draw [red] (6+.625,.375-2) circle [radius=.04];
			\draw [red] (6+.5,1-2) circle [radius=.04];
			\draw [red] (6+.5,0-2) circle [radius=.04];
			\draw [red] (6+0,.5-2) circle [radius=.04];
			\draw [red] (6+1,.5-2) circle [radius=.04];
			\draw [red] (6+.25,.875-2) circle [radius=.04];
			\draw [red] (6+.75,.875-2) circle [radius=.04];
			\draw [red] (6+.25,.125-2) circle [radius=.04];
			\draw [red] (6+.75,.125-2) circle [radius=.04];
			\draw [red] (6+.125,.75-2) circle [radius=.04];
			\draw [red] (6+.875,.25-2) circle [radius=.04];
			\draw [red] (6+.125,.25-2) circle [radius=.04];
			\draw [red] (6+.875,.75-2) circle [radius=.04];
			\end{tikzpicture}.
		\end{figure}
		In order to show the above diagrams form a basis of the subspace $\mathscr{Q}$, we need to compute the inner product matrix of the above diagrams and the basis $\mathcal{B}$. It follows from a direct computation that the inner product matrix $M$ is given as
		
		$\begin{bmatrix}
		1&1&1&1&1&1&1&1&1\\
		1&1&1&0&0&0&0&0&0\\
		1&0&0&1&0&0&1&0&0\\
		0&0&0&0&0&0&1&1&1\\
		0&0&1&0&0&1&0&0&1\\
		\binom{n-4}{2}&\binom{n-5}{2}&\binom{n-6}{2}&\binom{n-5}{2}&\binom{n-6}{2}&\binom{n-7}{2}&\binom{n-6}{2}&\binom{n-7}{2}&\binom{n-8}{2}\\
		x_1&x_2&x_3&x_4&x_5&x_6&x_7&x_8&x_9\\
		x_1&x_4&x_7&x_2&x_5&x_8&x_3&x_6&x_9\\
		y_1&y_2&y_3&y_4&y_5&y_6&y_7&y_8&y_9
		\end{bmatrix}$.
		
		\noindent
		In the inner product matrix $M$, $\{x_i:1\leq i\leq9\}$ and $\{y_i:1\leq i\leq9\}$ are given by
		\begin{align*}
		x_1&=\binom{n - 4}{2}\binom{n - 6}{2},\\
		x_2&=\left((n - 5) + \binom{n - 6}{2}\right)\binom{n - 5}{2},\\
		x_3&=\binom{n - 4}{2} +2(n - 6)\binom{n - 5}{2} + 
		\binom{n - 6}{2}\binom{n - 6}{2},\\
		x_4&=\binom{n - 5}{2}\binom{n - 7}{2},\\
		x_5&=(n - 6)\binom{n - 6}{2} + 
		\binom{n - 6}{2}\binom{n - 7}{2},\\
		x_6&=\binom{n - 5}{2} +2(n - 7)\binom{n - 6}{2} + 
		\binom{n - 7}{2}\binom{n - 7}{2},\\
		x_7&=\binom{n - 6}{2}\binom{n - 8}{2},\\
		x_8&=(n - 7)\binom{n - 7}{2} + 
		\binom{n - 7}{2}\binom{n - 8}{2},\\
		x_9&=\binom{n - 6}{2} +2(n - 8)\binom{n - 7}{2} + 
		\binom{n - 8}{2}\binom{n - 8}{2},\\
		y_1&=\frac{1}{16} (2919840 - 3704488 n + 2039584 n^2 - 637336 n^3 + 123793 n^4 - 15324 n^5 + 1182 n^6 - 52 n^7 + n^8),\\
		y_2=y_4&=\frac{1}{16}(3699360 - 4400712 n + 2297408 n^2 - 688048 n^3 + 129385 n^4 - 15652 n^5 + 1190 n^6 - 52 n^7 + n^8),\\
		y_3=y_7&=\frac{1}{16}(4579680 - 5154008 n + 2567320 n^2 - 739896 n^3 + 135017 n^4 - 15980 n^5 + 1198 n^6 - 52 n^7 + n^8),\\
		y_5&=\frac{1}{16}(4564896 - 5145064 n + 2565280 n^2 - 739688 n^3 + 135009 n^4 -15980 n^5 + 1198 n^6 - 52 n^7 + n^8),\\
		y_6=y_8&=\frac{1}{16}(5534816 - 5947448 n + 2845304 n^2 - 792464 n^3 + 140673 n^4 - 16308 n^5 + 1206 n^6 - 52 n^7 + n^8),\\
		y_9&=\frac{1}{16}(6600576 - 6800712 n + 3135568 n^2 - 846168 n^3 + 146369 n^4 - 16636 n^5 + 1214 n^6 - 52 n^7 + n^8).
		\end{align*}
		The determinant of $M$ is 
		\begin{equation*}
		\det{M}=8(2n^2-26n+83).
		\end{equation*}
		It follows that $\det{M}\neq0$ when $n\geq 8$. Therefore, the set $\mathscr{B}$ forms a basis for $\mathscr{Q}$, namely, $\mathscr{Q}\subset\mathscr{A}_4$. Hence, we have that
		\begin{equation}
		\begin{tikzpicture}
		\draw (0,0)--(1,1);
		\draw (0,1)--(1,0);
		\draw (.25,.75)--(.75,.75);
		\draw [red] (.5,.75) circle [radius=.04];
		\draw [fill=black](.25,.75) circle [radius=.04];
		\draw [fill=black](.75,.75) circle [radius=.04];
		\node at (1.75,.5) {$\in\mathscr{A}_{4,+}$};
		\draw [fill=black] (0,0) circle[radius=.04];
		\draw [fill=black] (1,1) circle[radius=.04];
		\draw [fill=black] (1,0) circle[radius=.04];
		\draw [fill=black] (0,1) circle[radius=.04];
		\end{tikzpicture}.
		\end{equation}. 
		\item \textbf{Reduced case:} Now we proceed to the case when $n=7$. In this case, the dimension of $\mathscr{Q}$ reduces to $8$ and the basis $\mathcal{B}$ reduces to 
		\begin{align*}
		b_1&=[\{1,2\},\{3,4\},\{1,2\},\{3,4\}],\\
		b_2&=[\{1,2\},\{3,4\},\{1,2\},\{3,5\}],\\
		b_3&=[\{1,2\},\{3,4\},\{1,2\},\{5,6\}],\\
		b_4&=[\{1,2\},\{3,4\},\{1,5\},\{3,4\}],\\
		b_5&=[\{1,2\},\{3,4\},\{1,5\},\{3,6\}],\\
		b_6&=[\{1,2\},\{3,4\},\{1,5\},\{6,7\}],\\
		b_7&=[\{1,2\},\{3,4\},\{5,6\},\{3,4\}],\\
		b_8&=[\{1,2\},\{3,4\},\{5,6\},\{3,7\}].
		\end{align*}
		It follows by direct computation that 
		\begin{equation}
		M=
		\left[
		\begin{array}{cccccccc}
		1 & 1 & 1 & 1 & 1 & 1 & 1 & 1 \\
		1 & 1 & 1 & 0 & 0 & 0 & 0 & 0 \\
		1 & 0 & 0 & 1 & 0 & 0 & 1 & 0 \\
		0 & 0 & 0 & 0 & 0 & 0 & 1 & 1 \\
		0 & 0 & 1 & 0 & 0 & 1 & 0 & 0 \\
		3 & 1 & 0 & 1 & 0 & 0 & 0 & 0 \\
		0 & 2 & 5 & 0 & 0 & 1 & 0 & 0 \\
		0 & 0 & 0 & 2 & 0 & 0 & 5 & 1 \\
		\end{array}
		\right].
		\end{equation}
		Moreover, $\det M=4\neq 0$. 
		Now we proceed to the case when $n=6$. In this case, the dimension of $\mathscr{Q}$ reduces to $6$ and the basis $\mathcal{B}$ reduces to 
		\begin{align*}
		b_1&=[\{1,2\},\{3,4\},\{1,2\},\{3,4\}],\\
		b_2&=[\{1,2\},\{3,4\},\{1,2\},\{3,5\}],\\
		b_3&=[\{1,2\},\{3,4\},\{1,2\},\{5,6\}],\\
		b_4&=[\{1,2\},\{3,4\},\{1,5\},\{3,4\}],\\
		b_5&=[\{1,2\},\{3,4\},\{1,5\},\{3,6\}],\\
		b_7&=[\{1,2\},\{3,4\},\{5,6\},\{3,4\}].
		\end{align*}
		It follows by direct computation that 
		\begin{equation}
		M=\left[
		\begin{array}{cccccc}
		1 & 1 & 1 & 1 & 1 & 1 \\
		1 & 1 & 1 & 0 & 0 & 0 \\
		1 & 0 & 0 & 1 & 0 & 1 \\
		0 & 0 & 0 & 0 & 0 & 1 \\
		0 & 0 & 1 & 0 & 0 & 0 \\
		3 & 1 & 0 & 1 & 0 & 0 \\
		\end{array}
		\right]
		\end{equation}
		Moreover, $\det M=1\neq 0$. 
	\end{enumerate}
	Therefore, in both cases, we show that $\mathscr{B}$ is a basis for the subspace $Q$. Since $R_S\in Q$ and $\mathscr{B}\subset\mathscr{A}_4$, we have that $R_S\in\mathscr{A}_\bullet$. By Lemma \ref{pro: reduce lemma} and Theorem \ref{lem: symmetric}, we prove the theorem. 
\end{proof}

\section{Appendix}\label{sec: appendix}
Theorem \ref{lem: symmetric} says that in a submodel $\mathscr{A}$ of a spin model, the existence of the transposition $R$ guarantees that the submodel $\mathscr{A}$ carries the symmetry of a finite group. This theorem is first proved independently by Jones\cite{JonPC} and Curtin\cite{Cur03}. In \S \ref{sec: generating property}, we give an alternative proof for the spin model associated to $KG_{n,2}$. In this section, we enhance the statement of Jones and Curtin for general planar algebras.

In \cite{Ren19Uni}, the transposition $R$ and the $\GHZ$ tensor are characterized by the following relations:
\begin{proposition}\label{pro: skein relation for R and GHZ}
	Let $\mathscr{P}_\bullet$ be a spin model. Then the transposition $R\in\mathscr{P}_4$ and $\GHZ\in\mathscr{P}_3$ satisfies the following relations:
	\begin{enumerate}
		\item\label{itm: rel1} Reidemeister moves:
		\begin{figure}[H]
			\begin{tikzpicture}
			\begin{scope}
			\draw (-.5,-.5)--(.25,.25);
			\draw (-.5,.5)--(.25,-.25) arc[radius=.25*1.414, start angle=-135,end angle=135];
			\node at (1.25,0) {$=$};
			\draw (1.75,.5)--(1.75,-.5);
			\draw [fill=black] (-.5,.5) circle [radius=.04];
			\draw [fill=black] (-.5,-.5) circle [radius=.04];
			\draw [fill=black] (1.75,.5) circle [radius=.04];
			\draw [fill=black] (1.75,-.5) circle [radius=.04];
			\end{scope}
			\begin{scope}[shift={(3.5,-.6)}]
			\draw (0,1.2) arc [radius=1.2/1.732, start angle=60, end angle=-60];
			\draw (.4,1.2) arc [radius=1.2/1.732, start angle=120, end angle=240];
			\node at (.6,.6) [right] {$=$};
			\draw (1.4,0)--(1.4,1.2);
			\draw (1.9,0)--(1.9,1.2);
			\draw [fill=black] (0,1.2) circle [radius=.04];
			\draw [fill=black] (.4,1.2) circle [radius=.04];
			\draw [fill=black] (0,0) circle [radius=.04];
			\draw [fill=black] (.4,0) circle [radius=.04];
			\draw [fill=black] (1.4,1.2) circle [radius=.04];
			\draw [fill=black] (1.4,0) circle [radius=.04];
			\draw [fill=black] (1.9,1.2) circle [radius=.04];
			\draw [fill=black] (1.9,0) circle [radius=.04];
			\end{scope}
			\begin{scope}[shift={(7,-.5)}]
			\draw (0,0)--(1,1);
			\draw (0,1)--(1,0);
			\draw (.35,0)--(.35,1);
			\node at (1.2,.5) [right] {$=$};
			\draw (2,0)--(3,1);
			\draw (2,1)--(3,0);
			\draw (2.65,0)--(2.65,1);
			\draw [fill=black] (0,1) circle [radius=.04];
			\draw [fill=black] (1,1) circle [radius=.04];
			\draw [fill=black] (.35,1) circle [radius=.04];
			\draw [fill=black] (2,1) circle [radius=.04];
			\draw [fill=black] (3,1) circle [radius=.04];
			\draw [fill=black] (2.65,1) circle [radius=.04];
			\draw [fill=black] (0,0) circle [radius=.04];
			\draw [fill=black] (1,0) circle [radius=.04];
			\draw [fill=black] (.35,0) circle [radius=.04];
			\draw [fill=black] (2,0) circle [radius=.04];
			\draw [fill=black] (3,0) circle [radius=.04];
			\draw [fill=black] (2.65,0) circle [radius=.04];
			\end{scope}
			\end{tikzpicture}.
		\end{figure}
		\item\label{itm: rel2} Flatness: For any $x\in\mathscr{P}_\bullet$, we have that
		\begin{figure}[H]
			\begin{tikzpicture}
			\draw [ultra thick](.5,1.5)--(.5,-.5);
			\node at (.45,1.4) [right] {$m$};
			\node at (.45,-.4) [right] {$n$};
			\draw (-.4,1)--(1.4,1);
			\draw [fill=white] (0,0) rectangle (1,.6);
			\node at (.5,.3) {$x$};
			\node at (.1,.3) [left] {$\$$};
			\node at (1.5,.5) [right]{$=$};
			\draw [ultra thick](3,-.5)--(3,1.5);
			\draw [fill=white] (2.5,1) rectangle (3.5,.4);
			\node at (2.95,1.4)[right] {$m$};
			\node at (2.95,-.4) [right] {$n$};
			\draw(2.1,0)--(3.9,0);
			\node at (3,.7) {$x$};
			\node at (2.6,.7) [left] {$\$$};
			\end{tikzpicture}.
		\end{figure}
		\item\label{itm: rel3} Frobenius relations:
		\begin{figure}[H]
			\begin{tikzpicture}
			\begin{scope}[scale=.8]
			\draw  (0,0)--(.5,-.5)--(1,0);
			\draw  (.5,-.5)--(1,-1)--(2,0);
			\draw  (1,-1)--(1,-1.5);
			\node at (2.5,-.75) {$=$};
			\draw (3,0)--(4,-1)--(5,0);
			\draw  (4,0)--(4.5,-.5);
			\draw  (4,-1.5)--(4,-1);
			\draw [fill=black] (.5,-.5) circle [radius=.05];
			\draw [fill=black] (1,-1) circle [radius=.05];
			\draw [fill=black] (4.5,-.5) circle [radius=.05];
			\draw [fill=black] (4,-1) circle [radius=.05];
			\draw [fill=black] (.5,-.5) circle [radius=.05];
			\draw [fill=black] (1,-1) circle [radius=.05];
			\draw [fill=black] (4.5,-.5) circle [radius=.05];
			\draw [fill=black] (4,-1) circle [radius=.05];
			\draw [fill=black] (0,0) circle [radius=.05];
			\draw [fill=black] (2,0) circle [radius=.05];
			\draw [fill=black] (1,0) circle [radius=.05];
			\draw [fill=black] (1,-1.5) circle [radius=.05];
			\draw [fill=black] (3,0) circle [radius=.05];
			\draw [fill=black] (4,0) circle [radius=.05];
			\draw [fill=black] (5,0) circle [radius=.05];            
			\draw [fill=black] (4,-1.5) circle [radius=.05];
			\end{scope}
			\begin{scope}[shift={(6,-.6)}]
			\draw  (0,-.6)--(0,.6);
			\draw [fill=white](0,0) circle [radius=.3];
			\node at (1,0) {$=$};
			\draw (1.7,-.6)--(1.7,.6);
			\draw [fill=black] (0,.3) circle [radius=.04];
			\draw [fill=black] (0,-.3) circle [radius=.04];
			\draw [fill=black] (0,.6) circle [radius=.04];
			\draw [fill=black] (0,-.6) circle [radius=.04];
			\draw [fill=black] (1.7,.6) circle [radius=.04];
			\draw [fill=black] (1.7,-.6) circle [radius=.04];
			\end{scope}
			\end{tikzpicture}.
		\end{figure}
	\end{enumerate}    
\end{proposition}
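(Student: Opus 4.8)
The plan is to prove all three families of relations by the same elementary mechanism. Every diagram appearing in Proposition~\ref{pro: skein relation for R and GHZ} is, via the operad action of Definition~\ref{def: spin model} (tensor product, contraction and permutation), a concrete element of some $\mathscr{P}_k=\mathcal{F}(X^k,\mathbb{C})$, and evaluating it at an arbitrary point of $X^k$ produces an explicit product of Kronecker deltas (times a power of the loop parameter coming from closed internal regions). Thus each claimed identity is an equality of two such functions, which I would verify by evaluating both sides pointwise on $X^k$. Before any bookkeeping I would record the two structural facts that make every such evaluation transparent: first, the $\GHZ$ tensor is the (co)multiplication of the commutative algebra $A=\mathcal{F}(X,\mathbb{C})$ with pointwise product, since $\GHZ(a,b,c)=\delta_{a,b}\delta_{b,c}$ forces all three legs equal, so a trivalent $\GHZ$-vertex simply identifies the indices of the strands meeting it; second, $R(x_1,x_2,x_3,x_4)=\delta_{x_1,x_3}\delta_{x_2,x_4}$ is the symmetric swap, so a crossing merely transports the index along each strand.

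For the Reidemeister moves (1), each side is a configuration of finitely many crossings and $\GHZ$-vertices on a small fixed set of external strands; after performing the indicated contractions, every such configuration reduces to a product of Kronecker deltas on the external legs. Concretely, a crossing closed against a cap or stacked against its mirror straightens to a through-strand because $R$ is an involutive swap, and a capped-off $\GHZ$-vertex collapses by the elementary identity $\sum_{a\in X}\delta_{x,a}\delta_{a,y}=\delta_{x,y}$; the remaining move expresses the naturality of the swap with respect to a through-strand. In each case I would confirm that the two sides yield the same delta-product by evaluating at an arbitrary external configuration.

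For the Frobenius relations (3), the two associativity diagrams each chain several $\GHZ$-vertices, and since a $\GHZ$-vertex forces the indices of all incident strands to coincide, both sides propagate a single common index and evaluate to the same delta-product on the external legs; the bubble diagram collapses by the same identity $\sum_{a\in X}\delta_{x,a}\delta_{a,y}=\delta_{x,y}$, which is precisely the special/separable condition $m\circ\Delta=\mathrm{id}$. Keeping track of the single loop scalar produced by the closed region makes the normalizations on the two sides match. Recognizing these as the axioms of a commutative special symmetric Frobenius algebra on $A=\mathcal{F}(X,\mathbb{C})$, with the crossing as its honest symmetry, organizes the computation and guarantees consistency across all three families.

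The substantive step is the flatness relation (2), because it is quantified over an arbitrary box $x\in\mathscr{P}_\bullet$ rather than being a closed combinatorial identity, and so cannot be settled by finite case analysis. Here the thick strand is a $\GHZ$-cable, a bundle of parallel strands constrained by $\GHZ$-vertices to carry one common index $s\in X$, and the content of the relation is that this cable is transparent to $x$. The argument I would give is index-theoretic: evaluating the left-hand side, the horizontal strand together with the cable broadcasts a single index $s$ to the relevant boundary legs of $x$ before $x$ is applied, whereas on the right-hand side $x$ is applied first and the same index $s$ is broadcast afterwards; since the broadcast only duplicates the value $s$ along the cable and never mixes it with the remaining arguments of $x$, both orders produce the identical function of $s$ and the remaining legs. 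Making this precise amounts to writing $x$ as an arbitrary function of its boundary indices and observing that, for each fixed value of those indices, the two sides agree term by term; the only care needed is the consistent placement of the $\$$ base point and the matching of the loop scalars produced when the horizontal strand closes against the cable. Once flatness is established in this form, the three families together give exactly the relations asserted, and I expect this all-$x$ argument to be the main obstacle, the rest being routine delta-calculus.
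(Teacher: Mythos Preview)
Your approach is correct and is essentially the only reasonable one: the paper does not supply a proof of this proposition at all (it is quoted from \cite{Ren19Uni} and treated as a routine verification), and the direct pointwise evaluation you describe, using that $R$ is the honest swap and $\GHZ$ is the diagonal characteristic function, is exactly how one checks such identities in a spin model.

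One small correction: in the flatness relation you misread the thick strand labelled $m$ (resp.\ $n$) as a $\GHZ$-cable carrying a single common index. In the paper's conventions a thick strand labelled $m$ is simply $m$ parallel independent strands, not a bundle fused by $\GHZ$-vertices. The content of the flatness relation is therefore the naturality of the symmetric swap $R$ with respect to an arbitrary morphism $x$: sliding a single strand across $x$ (by a sequence of $R$'s with the $m$ strands on top and the $n$ strands on the bottom) commutes with the application of $x$, because on points this is nothing more than the observation that reindexing before or after evaluating a function gives the same answer. Your index-theoretic argument still goes through once you drop the ``common index $s$'' picture and let the $m$ strands carry independent values; there are also no loop scalars to track in this relation.
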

Let $\mathscr{Q}_\bullet$ be a positive planar algebra such that there exists $S\in\mathscr{Q}_4$ and $W\in\mathscr{Q}_3$ satisfying relations \eqref{itm: rel1}, \eqref{itm: rel2} and \eqref{itm: rel3}. Note that relations \eqref{itm: rel1},\eqref{itm: rel2} and \eqref{itm: rel3} provide the skein theory for partition planar algebras \cite{Jon94}. This leads to the following corollaries:
\begin{enumerate}
	\item\label{itm: cor1Jones} The circle parameter for $\mathscr{Q}_\bullet$ is an integer, namely, there exists $d\in\mathbb{N}$ such that
	\begin{figure}[H]
		\begin{tikzpicture}
		\draw (0,0) circle [radius=.4];
		\node at (.75,0) {$=d$};
		\end{tikzpicture}.
	\end{figure}
	\item\label{itm: cor2Jones} Let $\mathscr{P}_\bullet^{S_d}$ be the group-action model associated to $S_d\curvearrowright \{1,2,\cdots,d\}$. Then there exists planar algebraic homomorphism $\alpha$ from $\mathscr{P}_\bullet^{S_d}$ to $\mathscr{Q}_\bullet$ such that $\alpha(R)=S$ and $\alpha(\GHZ)=W$.
\end{enumerate}
\begin{proposition}
	For every $n\in\mathbb{N}$, there exists a homomorphism from $S_n$ to $\mathscr{Q}_2n$ by sending the permutation $(k,k+1)$ to 
	\begin{figure}[H]
		\begin{tikzpicture}
		\draw (-.5,-.5)--(.5,.5);
		\draw (-.5,.5)--(.5,-.5);
		\draw [fill=black] (-.5,-.5) circle [radius=.04];
		\draw [fill=black] (.5,-.5) circle [radius=.04];
		\draw [fill=black] (-.5,.5) circle [radius=.04];
		\draw [fill=black] (.5,.5) circle [radius=.04];
		\draw [ultra thick] (-1.2,.55)--(-1.2,-.55);
		\node at (-1.3,0) [left] {$k-1$};
		\draw [ultra thick] (1.2,.55)--(1.2,-.55);
		\node at (1.3,0) [right] {$n-k-1$};
		\end{tikzpicture}.
	\end{figure}
\end{proposition}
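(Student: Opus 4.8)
The plan is to use the Coxeter presentation of the symmetric group. Recall that $S_n$ is generated by the adjacent transpositions $s_k=(k,k+1)$ for $1\leq k\leq n-1$, subject to the relations $s_k^2=e$, to $s_ks_l=s_ls_k$ whenever $|k-l|\geq 2$, and to the braid relation $s_ks_{k+1}s_k=s_{k+1}s_ks_{k+1}$. Write $S_k\in\mathscr{Q}_{2n}$ for the diagram displayed in the statement, namely the crossing $S$ placed on the $k$-th and $(k+1)$-th strands with a through strand on each of the remaining $n-2$ strands; I regard $\mathscr{Q}_{2n}$ as an algebra under vertical stacking, with unit the identity $2n$-box $\mathrm{id}_{2n}$. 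It then suffices to verify that each $S_k$ is invertible and that the family $\{S_k\}$ satisfies the three relations above, for the universal property of the presentation will extend $s_k\mapsto S_k$ to a homomorphism $S_n\to\mathscr{Q}_{2n}$ whose image consists of invertible elements.

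The verification rests on Proposition~\ref{pro: skein relation for R and GHZ}\eqref{itm: rel1} together with the elementary locality principle that an identity between elements of $\mathscr{Q}_4$ or $\mathscr{Q}_6$ drawn inside a small disk persists after it is inserted, alongside the identity tangle on the untouched strands, into a $2n$-box diagram; this is immediate from the operadic structure of planar tangles, since the two sides differ only inside a disk that is left alone by the surrounding tangle. Using this: first, $S_k^2=\mathrm{id}_{2n}$ follows by padding the second of the Reidemeister identities in \eqref{itm: rel1}, which says exactly that $S$ stacked on $S$ equals $\mathrm{id}_4$ in $\mathscr{Q}_4$; in particular $S_k=S_k^{-1}$, so each $S_k$ is invertible. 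Second, $S_kS_l=S_lS_k$ for $|k-l|\geq 2$ holds because the diagrams $S_k$ and $S_l$ then occupy disjoint vertical bands, so the two stackings are related by a planar isotopy. Third, the braid relation $S_kS_{k+1}S_k=S_{k+1}S_kS_{k+1}$ is obtained by padding the third Reidemeister identity in \eqref{itm: rel1}, which on three strands is precisely the equality between the two ways of sliding a strand past a crossing, i.e.\ $\sigma_1\sigma_2\sigma_1=\sigma_2\sigma_1\sigma_2$. These are exactly the defining relations of $S_n$, so the homomorphism exists and carries $(k,k+1)$ to $S_k$.

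I do not anticipate a real obstacle; the only point needing care is the bookkeeping behind the locality step, namely fixing a consistent convention that identifies ``crossing at position $k$, through strands elsewhere'' with $\mathrm{id}_{k-1}\otimes S\otimes\mathrm{id}_{n-k-1}$, and keeping in mind that the asserted homomorphism targets $\mathscr{Q}_{2n}$ with its stacking product, so that the claim is about the correct algebra structure. As an alternative one may shortcut the relation-checking: by Corollary~\eqref{itm: cor1Jones} the circle parameter $d$ of $\mathscr{Q}_\bullet$ is an integer, the coordinate action of $S_n$ on $X^n$ gives a homomorphism $S_n\to\mathscr{P}_{2n}^{S_d}$ sending $s_k$ to the transposition diagram $R_k$ (the transposition $R$ on strands $k,k+1$ with through strands elsewhere), and composing with the unital multiplicative planar algebra homomorphism $\alpha$ of Corollary~\eqref{itm: cor2Jones}, for which $\alpha(R)=S$, produces $s_k\mapsto S_k$ at once.
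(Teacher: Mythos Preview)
Your argument is correct and is precisely an unpacking of the paper's one-line proof, which simply observes that Relation~\eqref{itm: rel1} makes $S$ a symmetric braiding: your verification of $S_k^2=\mathrm{id}$, of far commutation, and of the braid relation via the second and third Reidemeister identities (together with the operadic locality you spell out) is exactly what that phrase means. The alternative route you sketch through $\alpha$ and Corollaries~\eqref{itm: cor1Jones}--\eqref{itm: cor2Jones} is also valid but is not the approach the paper takes.
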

\begin{proof}
	It follows that $S$ is a symmetric braiding from Relation \eqref{itm: rel1}. 
\end{proof}
\begin{remark}
	Suppose $\sigma\in S_n$. We represent it by the diagram $\raisebox{-4mm}{\begin{tikzpicture}
		\draw [ultra thick] (.5,-.4)--(.5,.9);
		\draw [fill=white] (.75,0) rectangle (.25,.5);
		\node at (.5,.25) {$\sigma$};
		\node at (.33,.25) [left] {$\$$};
		\node at (.35,.7) {$m$};
		\node at (.35,-.2) {$m$};
		\end{tikzpicture}}$. 
\end{remark}
\begin{proposition}
	Suppose $n\in\mathbb{N}$, we define the following operations on $\mathscr{Q}_n:$
	\begin{enumerate}
		\item A binary operation $\circ:$ given $f,g\in \mathscr{Q}_n$,
		\begin{figure}[H]
			\begin{tikzpicture}
			\draw [ultra thick] (.5,0)--(1.25,.5)--(2,0);
			\draw [ultra thick] (1.25,.5)--(1.25,1);
			\draw [fill=white] (1.25,.5) circle [radius=.2];
			\node at (1.25,.5) {$n$};
			\node at (-.5,-.3)[left]  {$f\circ g=$};
			\draw [fill=white] (0,0) rectangle (1,-.6);
			\draw [fill=white](1.5,0) rectangle (2.5,-.6);
			\node at (-.1,-.3) {$\$$};
			\node at (.5,-.3) {$f$};
			\node at (2,-.3) {$g$};
			\node at (1.4,-.3) {$\$$};
			\end{tikzpicture}.
		\end{figure}
		\item The involution $^\dagger:$ given $f\in\mathscr{Q}_n$, let $\sigma$ be the permutation on $\{1,2,\cdots,n\}$ such that $\sigma(j)=n-j$ for $1\leq j\leq n$,
		\begin{figure}[H]
			\begin{tikzpicture}
			\draw [fill=white] (0,0) rectangle (1,.6);
			\draw [ultra thick] (.5,0) arc[radius=.75, start angle=180, end angle=360]--(2,1.1);
			\draw [fill=white] (1.5,0) rectangle (2.5,.6);
			\node at (.5,.3) {$f^*$};
			\node at (2,.3) {$\sigma$};
			\node at (-.1,.3) {$\$$};
			\node at (1.4,.3) {$\$$};
			\node at (-.2,.3) [left] {$f^\dagger=$};
			\end{tikzpicture}.
		\end{figure}        
		\item The norm $||\cdot||:$ we equip $\mathscr{Q}_n$ with the inner product $\langle\cdot,\cdot\rangle:$ for $x,y\in\mathscr{Q}_n$,
		\begin{figure}[H]
			\begin{tikzpicture}
			\draw (0,0) rectangle (1,.6);
			\draw (0,1.1) rectangle (1,1.7);
			\draw [ultra thick] (.5,.6)--(.5,1.1);
			\node at (.35,.85) {$n$};
			\node at (.5,.3) {$x$};
			\node at (.5,1.4) {$y^*$};
			\node at (-.1,.3) {$\$$};
			\node at (-.1,1.4) {$\$$};
			\end{tikzpicture}.
		\end{figure} 
		We denote the norm on $\mathscr{Q}_n$ by $\langle\cdot,\cdot\rangle$ by $||\cdot||_2$. Suppose $f\in\mathscr{Q}_n$, we define the norm $||\cdot||$ by
		\begin{equation}
		||f||=\sup\{||f\circ x||_2:||x||_2=1\}.
		\end{equation}
	\end{enumerate}
	Then $(\mathscr{P}_n,\circ,^\dagger,||\cdot||)$ is a commutative $C^*$-algebra and we call it the Hadamard algebra.  
\end{proposition}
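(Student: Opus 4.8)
The plan is to verify the $C^*$-algebra axioms for $(\mathscr{Q}_n,\circ,{}^\dagger,\|\cdot\|)$ directly from the skein relations \eqref{itm: rel1}, \eqref{itm: rel2} and \eqref{itm: rel3}. The conceptual point is that the trivalent vertex $W$ equips a single strand with the structure of a commutative, special, symmetric Frobenius algebra --- the ``merge/copy'' structure of the partition planar algebra --- so that the $n$-strand vertex appearing in the definition of $\circ$ is nothing but $n$-fold pointwise multiplication; positivity of $\mathscr{Q}_\bullet$ then upgrades this commutative algebra to a $C^*$-algebra. (One could instead observe that $\circ$, ${}^\dagger$ and $\|\cdot\|$ are intertwined by the homomorphism $\alpha\colon\mathscr{P}^{S_d}_\bullet\to\mathscr{Q}_\bullet$ with the manifestly $C^*$ operations on the spin model, but since $\alpha$ need not be surjective onto $\mathscr{Q}_n$ the direct verification is cleaner.)

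First I would check that $(\mathscr{Q}_n,\circ)$ is a unital commutative associative algebra. Associativity follows by applying the first Frobenius relation in \eqref{itm: rel3} strand by strand, so that the two iterations of the $n$-strand vertex agree, giving $(f\circ g)\circ h=f\circ(g\circ h)$. The unit $u_n\in\mathscr{Q}_n$ is the element obtained by capping off the two lower legs of each of $n$ parallel copies of $W$; the unit axiom $W\circ(u_1\otimes\mathrm{id})=\mathrm{id}$ is exactly the second Frobenius relation in \eqref{itm: rel3} together with a Reidemeister move from \eqref{itm: rel1}, and tensoring gives $f\circ u_n=f$. Commutativity is the only place the braiding enters: one shows that absorbing a crossing $S$ into the two lower legs of $W$ returns $W$, i.e. $W\circ S=W$, which follows from $S$ being a symmetric braiding (Relation \eqref{itm: rel1}) together with flatness \eqref{itm: rel2}; applying this at each of the $n$ vertices shows that the $n$-strand vertex is invariant under the block-transposition of its $2n$ lower inputs, hence $f\circ g=g\circ f$.

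Next I would handle the $*$-structure. Since $f\mapsto f^*$ is the antilinear involution of the planar algebra and $\sigma$ is the order-reversing permutation with $\sigma^2=\mathrm{id}$, the map ${}^\dagger$ is an antilinear involution of $\mathscr{Q}_n$; that it respects $\circ$ follows from the compatibility of $W$ with $*$ (the ``merge'' and its adjoint ``split'') and the fact that $\sigma$ intertwines the two copies of the $n$-strand vertex. Positivity of $\mathscr{Q}_\bullet$ makes $\langle\cdot,\cdot\rangle$ a genuine inner product on the finite-dimensional space $\mathscr{Q}_n$, and the Frobenius relations give the adjunction identity $\langle f\circ x,y\rangle=\langle x,f^\dagger\circ y\rangle$ for all $x,y\in\mathscr{Q}_n$. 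Therefore the left-multiplication map $L\colon f\mapsto(x\mapsto f\circ x)$ is a $*$-representation of $(\mathscr{Q}_n,\circ,{}^\dagger)$ on the Hilbert space $(\mathscr{Q}_n,\langle\cdot,\cdot\rangle)$, and it is faithful since $L_f(u_n)=f$. By construction $\|f\|=\|L_f\|$ is the operator norm; thus $L$ realizes $(\mathscr{Q}_n,\circ,{}^\dagger,\|\cdot\|)$ as a $*$-subalgebra of $B(\mathscr{Q}_n)$ closed under adjoints, which is complete because $\mathscr{Q}_n$ is finite-dimensional, and the $C^*$-identity $\|f^\dagger\circ f\|=\|L_{f^\dagger}L_f\|=\|L_f\|^2=\|f\|^2$ is the standard one. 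Combined with the commutativity of $\circ$ established above, $(\mathscr{Q}_n,\circ,{}^\dagger,\|\cdot\|)$ is a commutative $C^*$-algebra.

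The step I expect to be the main obstacle is the Frobenius-algebra bookkeeping for $W$, namely verifying that $W$ is a commutative special symmetric Frobenius algebra object --- especially the cocommutativity $W\circ S=W$, the unit axiom, and the $*$-compatibility of the merge --- purely from \eqref{itm: rel1}, \eqref{itm: rel2} and \eqref{itm: rel3}, since these are precisely the identities encoding ``unordered blocks'' in the partition planar algebra and must be extracted rather than assumed. Once $W$ is known to carry this structure, the passage to a commutative $C^*$-algebra is routine finite-dimensional Hilbert-space functional analysis.
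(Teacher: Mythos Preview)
Your proposal is correct and takes the same approach as the paper: the paper's own proof is the single sentence ``This proposition follows directly from the skein relations of $S$ and $W$,'' and what you have written is precisely a careful unpacking of that sentence, verifying associativity and unitality from \eqref{itm: rel3}, commutativity from the symmetric braiding via \eqref{itm: rel1} and \eqref{itm: rel2}, and the $C^*$-structure via the faithful left-regular representation on the finite-dimensional inner-product space $\mathscr{Q}_n$. Your identification of the cocommutativity step $W\circ S=W$ as the one nontrivial diagrammatic check is apt; everything else is routine.
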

\begin{proof}
	This proposition follows directly from the skein relations of $S$ and $W$. 
\end{proof}

\begin{lemma}\label{lem: minimality}
	Suppose $p$ is a minimal projection in $\mathscr{Q}_m$ for some $m\in\mathbb{N}$. Then for any $g\in S_m$, the following are minimal projections:
	\begin{figure}[H]
		\begin{tikzpicture}
		\begin{scope}[shift={(-2.8,0)}]
		\draw (0,0) rectangle (1,.6);
		\node at (.5,.3) {$P$};
		\node at (-.1,.3) {$\$$};
		\draw [ultra thick](.5,.6)--(.5,1.8);
		\draw [fill=white] (.25,1) rectangle (.75,1.5);
		\node at (.5,1.25) {$g$};
		\node at (.15,1.25) {$\$$};
		\node at (.35,.8) {$m$};
		\node at (.35,1.6) {$m$};
		\end{scope}
		\begin{scope}
		\draw (0,0) rectangle (1,.6);
		\node at (.5,.3) {$P$};
		\node at (-.1,.3) {$\$$};
		\draw (.1,.6)--(.1,.9);
		\draw (-.2,1.2)--(.1,.9)--(.4,1.2);
		\draw [ultra thick] (.7,.6)--(.7,1.2);
		\draw [fill=black] (.1,.9) circle [radius=.04];
		\draw [fill=black] (-.2,1.2) circle [radius=.04];
		\draw [fill=black] (.4,1.2) circle [radius=.04];
		\node at (.8,.9) [right]{$m-1$};
		\end{scope}
		\begin{scope}[shift={(3,0)}]
		\draw (0,0) rectangle (1,.6);
		\node at (.5,.3) {$P$};
		\node at (-.1,.3) {$\$$};
		\draw (.1,.6)--(.1,.9);
		\draw [fill=black] (.1,.9) circle [radius=.04];
		\draw [ultra thick] (.7,.6)--(.7,1.2);
		\node at (.8,.9) [right]{$m-1$};
		\end{scope}
		\end{tikzpicture}.
	\end{figure}
\end{lemma}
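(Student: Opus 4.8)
The plan is to reduce the assertion to a computation with orbit characteristic functions in the group-action model, and then to handle the three tangles one at a time. By Corollary \ref{itm: cor1Jones} the circle parameter of $\mathscr{Q}_\bullet$ is an integer $d$, and by Corollary \ref{itm: cor2Jones} there is a planar algebra homomorphism $\alpha\colon\mathscr{P}_\bullet^{S_d}\to\mathscr{Q}_\bullet$ with $\alpha(\GHZ)=W$ and $\alpha(R)=S$. Since the product $\circ$, the involution $^\dagger$, and the three tangles appearing in the lemma are all assembled from $\GHZ$ and $R$, they commute with $\alpha$; moreover each $\alpha_m\colon(\mathscr{P}_m^{S_d},\circ)\to(\mathscr{Q}_m,\circ)$ is a surjective unital $*$-homomorphism of finite-dimensional commutative $C^*$-algebras, hence a restriction-of-functions map onto a sub-spectrum. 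It follows that every minimal projection of $\mathscr{Q}_m$ lifts through $\alpha_m$ to a minimal projection of $\mathscr{P}_m^{S_d}$, that the $\alpha$-image of a minimal projection is $0$ or minimal, and that a tangle applied to $p\in\mathscr{Q}_m$ equals the $\alpha$-image of the same tangle applied to any lift of $p$. So it suffices to prove the lemma in $\mathscr{P}_\bullet^{S_d}$, where the $m$-box space is the algebra of $S_d$-invariant functions on $X^m$ under pointwise product and its minimal projections are exactly the characteristic functions $\chi_O$ of the $S_d$-orbits $O\subseteq X^m$.

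With this reduction in hand, I would identify the effect of each tangle on $\chi_O$. Precomposition with $g\in S_m$ permutes the $m$ tensor factors of $X^m$, sending $\chi_O$ to $\chi_{gO}$; since the factor-permuting $S_m$-action commutes with the value-permuting $S_d$-action, $gO$ is again a single orbit, and as the tangle is invertible it is an automorphism of the Hadamard algebra, so it permutes atoms --- this is the first assertion. The ``split'' tangle (capping the first strand of $P$ with the trivalent $\GHZ$) sends $\chi_O$ to $\chi_{O'}$, where $O'=\{(x_1,x_1,x_2,\dots,x_m):(x_1,\dots,x_m)\in O\}\subseteq X^{m+1}$ is again a single orbit; abstractly, setting $q$ equal to the split of the identity $\mathbbm{1}_m$, one checks with the Frobenius relation \eqref{itm: rel3} (and the merge tangle it supplies) that $q\circ q=q$ and that the split tangle is a unital $*$-isomorphism of $(\mathscr{Q}_m,\circ)$ onto the corner $q\,\mathscr{Q}_{m+1}\,q$, and such an isomorphism carries atoms to atoms of $\mathscr{Q}_{m+1}$ --- this is the second assertion.

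The third assertion, capping the first strand of $P$ with $\GHZ_1$, is the one I expect to be the main obstacle. On functions this tangle reads $\chi_O\mapsto\sum_{x_1}\chi_O(x_1,x_2,\dots,x_m)=c_O\,\chi_{\pi(O)}$, where $\pi$ forgets the first coordinate and $c_O\geq 1$ is the common fibre size of $O$ over $\pi(O)$; the substance is therefore to show $c_O=1$, i.e.\ that two distinct first-coordinate preimages of a point of $\pi(O)$ cannot lie in the same orbit. I would try to extract this from Proposition \ref{pro: skein relation for R and GHZ}: the counit form of the Frobenius relation \eqref{itm: rel3} gives that splitting a strand and then capping one of the two new legs is the identity, while flatness \eqref{itm: rel2} combined with the symmetric-braiding relation \eqref{itm: rel1} lets one slide $P$ past the cap and rewrite the capped diagram in the form $\sigma'(P')$ for a projection $P'\in\mathscr{Q}_{m-1}$ and a split tangle $\sigma'$, at which point the second assertion applies. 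The delicate point --- and the reason the hypothesis is that $P$ be \emph{minimal}, not merely a projection --- is precisely this collapse of $c_O$: for a general projection the capped element is only a nonnegative-integer combination of atoms, so minimality must be genuinely used (through flatness) to rule out $c_O>1$. Granting that the three tangles preserve atoms in $\mathscr{P}_\bullet^{S_d}$, the lemma for a general $\mathscr{Q}_\bullet$ follows from the reduction in the first paragraph.
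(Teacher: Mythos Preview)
Your reduction step has a genuine gap: the map $\alpha_m\colon(\mathscr{P}_m^{S_d},\circ)\to(\mathscr{Q}_m,\circ)$ is \emph{not} surjective in general. The map $\alpha$ from Corollary~\eqref{itm: cor2Jones} is an inclusion (the paper writes $\mathscr{P}_m^{S_d}\subset\mathscr{Q}_m$ in the proof of Theorem~\ref{thm: key}); indeed the entire purpose of Theorem~\ref{thm: key}, for which this lemma is a stepping stone, is to identify $\mathscr{Q}_\bullet$ with some $\mathscr{P}_\bullet^{G}$ for $G\leq S_d$, and for proper $G$ one has $\dim\mathscr{P}_m^{S_d}<\dim\mathscr{Q}_m$. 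With $\alpha_m$ merely an inclusion, a minimal projection of $\mathscr{Q}_m$ need not lie in the image of $\alpha_m$ at all, so there is no ``lift'' to work with, and the reduction to orbit characteristic functions in $\mathscr{P}_\bullet^{S_d}$ collapses.

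There is a second problem in your treatment of the third tangle. Even in the model $\mathscr{P}_\bullet^{S_d}$ itself, the fibre constant $c_O$ is \emph{not} forced to equal $1$ by minimality: take $d\geq 3$, $m=2$, and $O=\{(i,j):i\neq j\}$, which is a single $S_d$-orbit, hence $\chi_O$ is minimal; yet $\sum_{x_1}\chi_O(x_1,x_2)=d-1$ for every $x_2$, so $c_O=d-1>1$. The flatness/Frobenius manoeuvre you sketch cannot produce $c_O=1$ because the conclusion is simply false at that level of generality.

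The paper's argument avoids both issues by staying inside $\mathscr{Q}_\bullet$ and using only the skein relations of $S$ and $W$. For the first tangle it computes, via Relations~\eqref{itm: rel1}--\eqref{itm: rel3},
\[
(g\cdot p)\circ x \;=\; g\cdot\bigl(p\circ(g^{-1}\cdot x)\bigr),
\]
and then uses minimality of $p$ directly: for $x$ a minimal projection, the right-hand side is nonzero iff $p=g^{-1}\cdot x$, i.e.\ iff $x=g\cdot p$. This shows $g\cdot p$ is an atom of $(\mathscr{Q}_m,\circ)$ with no reference to any model. The remaining cases are handled by the same kind of intertwining identity obtained from the skein relations; no reduction to $\mathscr{P}_\bullet^{S_d}$ is needed or possible at this stage of the argument.
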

\begin{proof}
	This lemma follows directly from the skein relations of $S$ and $W$. We will only show the first one as an example. Let $x\in\mathscr{Q}_m$. Then we have that
	\begin{figure}[H]
		\begin{tikzpicture}
		\begin{scope}
		\node at (-.3,.3) [left] {$(g\cdot p)\circ x=$};
		\draw (0,0) rectangle (1,.6);
		\node at (.5,.3) {$P$};
		\node at (-.1,.3) {$\$$};
		\draw [ultra thick](.5,.6)--(.5,1.5) arc[radius=.1, start angle=180, end angle=90]--(1.9,1.6) arc[radius=.1, start angle=90, end angle=0]--(2,.6);
		\draw [fill=white] (.25,1) rectangle (.75,1.5);
		\node at (.5,1.25) {$g$};
		\node at (.15,1.25) {$\$$};
		\node at (.35,.8) {$m$};
		\draw (1.5,0) rectangle (2.5,.6);
		\node at (2,.3) {$x$};
		\node at (1.4,.3) {$\$$};
		\draw [ultra thick] (1.25,1.6)--(1.25,2.6);
		\draw [fill=white] (1.25,1.6) circle [radius=.2];
		\node at (1.25,1.6) {$m$};
		\end{scope}
		\begin{scope}[shift={(3.5,0)}]
		\node at (-.5,.3) {$=$};
		\draw (0,0) rectangle (1,.6);
		\node at (.5,.3) {$P$};
		\node at (-.1,.3) {$\$$};
		\draw [ultra thick](.5,.6)--(.5,1.5) arc[radius=.1, start angle=180, end angle=90]--(1.9,1.6) arc[radius=.1, start angle=90, end angle=0]--(2,.6);
		\draw [fill=white] (1.75,1) rectangle (2.25,1.5);
		\node at (2,1.25) {$g$};
		\node at (2.35,1.25) {$\$$};
		\node at (2.25,.8) {$m$};
		\draw (1.5,0) rectangle (2.5,.6);
		\node at (2,.3) {$x$};
		\node at (1.4,.3) {$\$$};
		\draw [ultra thick] (1.25,1.6)--(1.25,2.6);
		\draw [fill=white] (1.25,1.6) circle [radius=.2];
		\node at (1.25,1.6) {$m$};
		\draw [fill=white] (1,1.9) rectangle (1.5,2.4);
		\node at (1.25,2.15) {$g$};
		\node at (.9,2.15) {$\$$};
		\end{scope}
		\node at (6.2,.3) [right] {$=g\cdot (p\circ (g^{-1}\cdot x))$.};
		\end{tikzpicture}
	\end{figure}
	Since $p$ is a minimal projection, we know the the right hand side is nonzero if and only if $p=g^{-1}\cdot x$ if and only if $g\cdot p=x$. This implies that $g\cdot p$ is a minimal projection. 
\end{proof}

\begin{theorem}\label{thm: key}
	Let $\mathscr{Q}_\bullet$ be a planar algebra such that there exists $R\in\mathscr{Q}_4$ and $W\in\mathscr{Q}_3$ satisfying relations \eqref{itm: rel1}, \eqref{itm: rel2} and \eqref{itm: rel3}. Then there exists $d\in\mathbb{N}$ and a finite group $G\leq S_d$ such that $\mathscr{Q}_\bullet$ is isomorphic to $\mathscr{P}_\bullet^{G}$.     
\end{theorem}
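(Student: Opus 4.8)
The plan is to reconstruct the underlying set and the symmetry group from the internal data of $\mathscr{Q}_\bullet$: the partition-algebra skein relations satisfied by $R$ and $W$ fix the set through the circle parameter, while the commutative Hadamard algebras $\mathscr{Q}_n$ together with their coordinate-permutation symmetries produce the group through the orbit structure of their minimal projections.

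First I would recover the set. By the first corollary above, the circle parameter of $\mathscr{Q}_\bullet$ is an integer $d\in\mathbb{N}$; set $X=\{1,\dots,d\}$. By the second corollary, there is a planar algebra homomorphism $\alpha\colon\mathscr{P}^{S_d}_\bullet\to\mathscr{Q}_\bullet$ sending the transposition to $R$ and the $\GHZ$ tensor to $W$. I would next check that $\alpha$ is injective: its kernel is a planar $*$-ideal, and since $\mathscr{P}^{S_d}_\bullet$ is a $C^*$ planar algebra carrying a faithful Markov trace that $\alpha$ preserves (closed diagrams evaluate to powers of $d$ on both sides), this kernel vanishes. Hence $\mathscr{N}_\bullet:=\alpha(\mathscr{P}^{S_d}_\bullet)\cong\mathscr{P}^{S_d}_\bullet$ sits canonically inside $\mathscr{Q}_\bullet$; in particular $\mathscr{Q}_d$ contains the projection $\chi_D$, the image under $\alpha$ of the $S_d$-molecule, that is, the characteristic function of the set $D\subset X^d$ of tuples with pairwise distinct entries, which is already built diagrammatically out of $R$ and $W$.

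Next I would extract the group. The homomorphism $S_n\to\mathscr{Q}_{2n}$ constructed above makes $S_n$ act on the Hadamard algebra $(\mathscr{Q}_n,\circ)$ --- a commutative $C^*$-algebra --- by permuting coordinates, and by Lemma \ref{lem: minimality} this action permutes the minimal projections of $\mathscr{Q}_n$ and is compatible with the coordinate-forgetting ("cap") and coordinate-inserting ("$\GHZ$") maps between $\mathscr{Q}_n$ and $\mathscr{Q}_{n\pm1}$. I would decompose $\chi_D$ into a sum of minimal projections of $\mathscr{Q}_d$, fix one summand $e_0$, and set $G=\operatorname{Stab}_{S_d}(e_0)\leq S_d$. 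Since $S_d$ acts simply transitively on $D$, comparing with the $\mathscr{N}_d$-part shows that $e_0$ is the indicator of a single coset of $G$ inside $D$; equivalently, $e_0$ is the molecule $\chi_{S_M}$ with $S_M=\{(g\cdot1,\dots,g\cdot d):g\in G\}$, and it satisfies the defining relations of the molecule in Proposition \ref{pro: diagrammatic representation}. Consequently $R$, $W$ and $e_0$ generate a copy of $\mathscr{P}^G_\bullet$ inside $\mathscr{Q}_\bullet$, giving $\mathscr{P}^G_\bullet\subseteq\mathscr{Q}_\bullet$.

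It remains to prove the reverse inclusion $\mathscr{Q}_\bullet\subseteq\mathscr{P}^G_\bullet$, and I expect this to be the \emph{main obstacle}. Here I would show that every minimal projection of every $\mathscr{Q}_n$ already lies in $\mathscr{P}^G_\bullet$. At level $d$ this should follow from the analysis above, identifying the minimal projections of $\mathscr{Q}_d$ with the $G$-orbits on $X^d$; then, since each $\mathscr{Q}_n$ is commutative, I would propagate this identification up and down the tower using the "cap" and "$\GHZ$" maps of Lemma \ref{lem: minimality}, which relate minimal projections at levels $n$ and $n\pm1$, to conclude that $\mathscr{Q}_n=\mathcal{F}(G\backslash X^n)=\mathscr{P}^G_n$ for every $n$. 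Matching minimal projections level by level then yields the isomorphism $\mathscr{Q}_\bullet\cong\mathscr{P}^G_\bullet$. The delicate point throughout is that the commutativity of the Hadamard algebras must be combined with the rigidity in Lemma \ref{lem: minimality} to rule out any minimal projections of $\mathscr{Q}_\bullet$ beyond those forced by the group $G$ that the level-$d$ data already determines.
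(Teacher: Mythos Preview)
Your overall architecture matches the paper's: push the $S_d$--molecule $Y=\chi_D$ into $\mathscr{Q}_d$ via $\alpha$, decompose it in the commutative Hadamard algebra as $Y=\sum_{i=1}^m Y_i$ with $Y_i$ minimal, let $S_d$ permute the $Y_i$'s, take $G$ to be a stabilizer, and then argue that $\{R,W,Y_1\}$ generates $\mathscr{Q}_\bullet$ and satisfies the universal skein theory for $\mathscr{P}^G_\bullet$. But there is a genuine gap at the step you pass over most quickly.

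The problem is your sentence ``Since $S_d$ acts simply transitively on $D$, comparing with the $\mathscr{N}_d$-part shows that $e_0$ is the indicator of a single coset of $G$ inside $D$.'' At this point you do not know that $\mathscr{Q}_\bullet$ embeds in any spin model, so you cannot yet speak of $e_0$ as an indicator function on $X^d$; that is precisely the enhancement over Jones--Curtin that the theorem provides. What you actually need is the purely internal statement that the $S_d$--action on $\{Y_1,\dots,Y_m\}$ is \emph{transitive}, so that the stabilizers are conjugate and $Y_1$ satisfies the molecule relations. Nothing in your outline establishes this; simple transitivity of $S_d$ on $D$ is a fact about the spin model, not about $\mathscr{Q}_d$. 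The paper fills this gap with a counting argument: using the higher-box relation for $Y$ one derives, diagrammatically inside $\mathscr{Q}_\bullet$, that $Y_i\otimes Y_i=\sum_{g\in G_i} (1\otimes g)\cdot\Delta(Y_i)$ and hence $Y_i^*Y_i=|G_i|$; summing gives $\sum_i|G_i|=Y^*Y=|S_d|$, while orbit--stabilizer gives $|G_i|\,|O_i|=|S_d|$, and combining these with $|O_i|\le m$ forces $|O_i|=m$ for every $i$. This transitivity step is the crux and is absent from your proposal.

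Your reverse inclusion is also less sharp than the paper's. Rather than ``propagating up and down'' between levels via caps and $\GHZ$, the paper argues directly at each level $m$: any minimal projection $p\in\mathscr{Q}_m$ lies below some projection in $\alpha(\mathscr{P}^{S_d}_m)$, which by the universal skein theory is of the form $T\cdot Y$ for a tangle $T$; decomposing $Y$ gives $p\le\sum_i T\cdot Y_i$ with each $T\cdot Y_i$ minimal by Lemma~\ref{lem: minimality}, hence $p=T\cdot Y_i$ for some $i$, and since $Y_i=g\cdot Y_1$ this exhibits $p$ in the subalgebra generated by $R$, $W$, $Y_1$. Your cap/$\GHZ$ propagation, absent a concrete mechanism, does not obviously reach arbitrary minimal projections at large $n$.
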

\begin{proof}
	Let $d$ be the circle parameter of $\mathscr{Q}_\bullet$. By Corollary \eqref{itm: cor1Jones},we have that $d$ is an integer. By universal skein theory for group actions \cite{Ren19Uni}, the group-action model $\mathscr{P}_\bullet^{S_d}$ is generated by $\GHZ$, the transposition $R$ and the molecule $Y=\chi_{S_d\cdot(1,2,\cdots,d)}$. By Universal skein theory for group-actions \cite{Ren19Uni}, we know that
	\begin{align}
	&\begin{tikzpicture}
	\begin{scope}
	\draw (0,0) rectangle (1,.6);
	\draw (1.5,0) rectangle (2.5,.6);
	\draw [ultra thick] (.5,.6)--(.5,1.5);
	\draw [ultra thick] (2,.6)--(2,1.5);
	\node at (-.1,.3) {$\$$};
	\node at (1.4,.3) {$\$$};
	\node at (.5,.3) {$Y$};
	\node at (2,.3) {$Y$};
	\node at (3,.3) {$=$};
	\node at (.35,1) {$d$};
	\node at (1.85,1) {$d$};
	\end{scope}
	\begin{scope}[shift={(4.2,0)}]
	\node at (-.5,.2) {$\displaystyle\sum_{g\in S_d}$};
	\draw (0,0) rectangle (1,.6);
	\draw [ultra thick] (.5,.6)--(.5,.8);
	\draw [ultra thick] (.5,1)--(.8,1.3) arc[radius=.2*1.414/.414, start angle=-45, end angle=0]--(1,2.3);
	\draw [ultra thick] (.5,1)--(.2,1.3) arc[radius=.2*1.414/.414, start angle=225, end angle=180]--(0,2.3);
	\draw [fill=white] (.75,2) rectangle (1.25,1.5);
	\draw [fill=white] (.5,1) circle [radius=.2];
	\node at (1,1.75) {$g$};
	\node at (.65,1.75) {$\$$};
	\node at (.5,.3) {$Y$};
	\node at (.5,1) {$d$};
	\end{scope}
	\end{tikzpicture}\label{equ: HighforY},\\
	&\begin{tikzpicture}
	\node at (-.4,.85) [left]{$Y^*Y=$};
	\draw (0,0) rectangle (1,.6);
	\draw (0,1.1) rectangle (1,1.7);
	\draw [ultra thick] (.5,.6)--(.5,1.1);
	\node at (.35,.85) {$d$};
	\node at (.5,.3) {$Y$};
	\node at (.5,1.4) {$Y^*$};
	\node at (-.1,.3) {$\$$};
	\node at (-.1,1.4) {$\$$};
	\node at (2,.85) {$=|S_d|$};
	\end{tikzpicture}.
	\end{align}
	It follows that the image of $Y$ under $\alpha$ is a projection in the Hadamard algebra $(\mathscr{Q}_d,\circ)$. We still denote it by $Y$ with abusing of notations. Since $(\mathscr{Q}_d,\circ)$ is commutative, there exist orthogonal minimal projections $Y_1,Y_2,\cdots,Y_m$ such that 
	\begin{equation}
	Y=Y_1+Y_2+\cdots+Y_m. 
	\end{equation}
	Let $X=\{Y_1,Y_2,\cdots,Y_m\}$. For every $g\in S_d$ and $1\leq j\leq m$, we have that $g\cdot Y_j$ is also a minimal subprojection of $Y$ by Lemma \ref{lem: minimality}. Therefore, there exists an action of $S_d$ on $X$ which permutes these minimal projections. For every $1\leq j\leq m$, let $G_i$ be the stabilizer of $Y_i$, namely, 
	\begin{equation}
	G_j=\{g\in S_d: g\cdot Y_j=Y_j\}.
	\end{equation}
	By Equation \eqref{equ: HighforY}, we know that
	\begin{equation}\label{equ: higer for Yi}
	\begin{tikzpicture}
	\begin{scope}[scale=.75]
	\begin{scope}
	\draw (0,0) rectangle (1,.6);
	\draw (1.5,0) rectangle (2.5,.6);
	\draw [ultra thick] (.5,.6)--(.5,1.5);
	\draw [ultra thick] (2,.6)--(2,1.5);
	\node at (-.1,.3) {$\$$};
	\node at (1.4,.3) {$\$$};
	\node at (.5,.3) {$Y_i$};
	\node at (2,.3) {$Y_i$};
	\node at (3,.3) {$=$};
	\node at (.35,1) {$d$};
	\node at (1.85,1) {$d$};
	\end{scope}
	\begin{scope}[shift={(3.5,0)}]
	\draw (0,0) rectangle (1,.6);
	\draw (1.5,0) rectangle (2.5,.6);
	\draw [ultra thick] (.5,.6)--(.5,1.5);
	\draw [ultra thick] (2,.6)--(2,1.5);
	\node at (-.1,.3) {$\$$};
	\node at (1.4,.3) {$\$$};
	\node at (.5,.3) {$Y_i$};
	\node at (2,.3) {$Y_i$};
	\begin{scope}[shift={(3,0)}]
	\draw (0,0) rectangle (1,.6);
	\draw (1.5,0) rectangle (2.5,.6);
	\draw [ultra thick] (.5,.6)--(.5,1.5);
	\draw [ultra thick] (2,.6)--(2,1.5);
	\node at (-.1,.3) {$\$$};
	\node at (1.4,.3) {$\$$};
	\node at (.5,.3) {$Y$};
	\node at (2,.3) {$Y$};
	\end{scope}
	\draw [ultra thick](.5,1.5) arc[radius=.3,start angle=180, end angle=90]--(3.2,1.8) arc[radius=.3, start angle=90, end angle=0];
	\draw [ultra thick] (2,1.5)--(2,2) arc[radius=.3,start angle=180, end angle=90]--(4.7,2.3) arc[radius=.3, start angle=90, end angle=0]--(5,1.5);
	\draw [ultra thick] (1.5,1.8)--(1.5,2.8);
	\draw [ultra thick] (3,2.3)--(3,2.8);
	\draw [fill=white] (1.5,1.8) circle [radius=.2];
	\node at (1.5,1.8) {$d$};
	\draw [fill=white] (3,2.3) circle [radius=.2];
	\node at (3,2.3) {$d$};
	\end{scope}
	\begin{scope}[shift={(11,0)}]
	\draw (0,0) rectangle (1,.6);
	\draw (1.5,0) rectangle (2.5,.6);
	\draw [ultra thick] (.5,.6)--(.5,1.5);
	\draw [ultra thick] (2,.6)--(2,1.5);
	\node at (-.1,.3) {$\$$};
	\node at (1.4,.3) {$\$$};
	\node at (.5,.3) {$Y_i$};
	\node at (2,.3) {$Y_i$};
	\begin{scope}[shift={(3,0)}]
	\node at (-.1,.3) {$\$$};
	\draw (0,0) rectangle (1,.6);
	\draw [ultra thick] (.5,.6)--(.5,.8);
	\draw [ultra thick] (.5,1)--(.8,1.3) arc[radius=.2*1.414/.414, start angle=-45, end angle=0]--(1,2.3);
	\draw [ultra thick] (.5,1)--(.2,1.3) arc[radius=.2*1.414/.414, start angle=225, end angle=180]--(0,1.8);
	\draw [fill=white] (.75,2) rectangle (1.25,1.5);
	\draw [fill=white] (.5,1) circle [radius=.2];
	\node at (1,1.75) {$g$};
	\node at (.65,1.75) {$\$$};
	\node at (.5,.3) {$Y$};
	\node at (.5,1) {$d$};
	\end{scope}
	\draw [ultra thick] (.5,1.5)--(.5,1.8) arc[radius=.3, start angle=180, end angle=90]--(2.7,2.1) arc[radius=.3, start angle=90, end angle=0];          
	\draw [ultra thick] (2,1.5)--(2,2.5)arc[radius=.3, start angle=180, end angle=90]--(3.7,2.8) arc[radius=.3, start angle=90, end angle=0]--(4,2);
	\draw [ultra thick] (2.7,2.8)--(2.7,3.3);
	\draw [ultra thick] (1.2,2.1)--(1.2,3.3);
	\draw [fill=white] (2.7,2.8) circle [radius=.2];
	\node at (2.7,2.8) {$d$};
	\draw [fill=white] (1.2,2.1) circle [radius=.2];
	\node at (1.2,2.1) {$d$};        
	\node at (-.3,.1) [left] {$=\displaystyle\sum_{g\in S_d}$};
	\end{scope}
	\begin{scope}[shift={(4.2,-4)}]
	\draw (0,0) rectangle (1,.6);
	\draw (1.5,0) rectangle (2.5,.6);
	\draw [ultra thick] (.5,.6)--(.5,.8);
	\draw [ultra thick] (2,.6)--(2,1.5);
	\node at (-.1,.3) {$\$$};
	\node at (1.4,.3) {$\$$};
	\node at (.5,.3) {$Y_i$};
	\node at (2,.3) {$Y_i$};
	\begin{scope}[shift={(3,0)}]
	\draw (0,0) rectangle (1,.6);
	\draw [ultra thick] (.5,.6)--(.5,.8);
	\node at (-.1,.3) {$\$$};
	\node at (.5,.3) {$Y$};
	\end{scope}
	\draw [ultra thick] (.5,.8) arc[radius=.3, start angle=180, end angle=90]--(3.2,1.1) arc[radius=.3, start angle=90, end angle=0];          
	\draw [ultra thick] (2,1.5)--(2,2.5)arc[radius=.3, start angle=180, end angle=90]--(3.7,2.8) arc[radius=.3, start angle=90, end angle=0]--(4,2);
	\draw [ultra thick] (2.7,2.8)--(2.7,3.3);
	\draw [ultra thick] (1.2,3.3)--(1.2,2) arc[radius=.3, start angle=180, end angle=270]--(3.7,1.7) arc[radius=.3, start angle=270, end angle=360];
	\draw [ultra thick] (1.2,2.1)--(1.2,3.3);
	\draw [ultra thick] (2.7,1.1)--(2.7,1.7);
	\draw [fill=white] (2.7,1.1) circle [radius=.2];
	\node at (2.7,1.1) {$d$};
	\draw [fill=white] (2.7,1.7) circle [radius=.2];
	\node at (2.7,1.7) {$d$};        
	\draw [fill=white] (2.7,2.8) circle [radius=.2];
	\draw [fill=white] (3.75,2.5) rectangle (4.25,2);
	\node at (4,2.25) {$g$};
	\node at (3.65,2.25) {$\$$};      
	\node at (2.7,2.8) {$d$};
	\node at (-.3,.1) [left] {$=\displaystyle\sum_{g\in S_d}$};
	\end{scope}
	\begin{scope}[shift={(9.5,-4)}]
	\draw (1.5,0) rectangle (2.5,.6);
	\draw [ultra thick] (2,.6)--(2,1.5);
	\node at (1.4,.3) {$\$$};
	\node at (2,.3) {$Y_i$};
	\begin{scope}[shift={(3,0)}]
	\draw (0,0) rectangle (1,.6);
	\draw [ultra thick] (.5,.6)--(.5,1.4);
	\node at (-.1,.3) {$\$$};
	\node at (.5,.3) {$Y_i$};
	\end{scope}
	\draw [ultra thick] (2,1.5)--(2,2.5)arc[radius=.3, start angle=180, end angle=90]--(3.7,2.8) arc[radius=.3, start angle=90, end angle=0]--(4,2);
	\draw [ultra thick] (2.7,2.8)--(2.7,3.3);
	\draw [ultra thick] (1.2,3.3)--(1.2,1.7) arc[radius=.3, start angle=180, end angle=270]--(3.7,1.4) arc[radius=.3, start angle=270, end angle=360]--(4,2);
	\draw [ultra thick] (1.2,2.1)--(1.2,3.3);
	\draw [fill=white] (3.5,1.4) circle [radius=.2];
	\node at (3.5,1.4) {$d$};        
	\draw [fill=white] (2.7,2.8) circle [radius=.2];
	\draw [fill=white] (3.75,2.5) rectangle (4.25,2);
	\node at (4,2.25) {$g$};
	\node at (3.65,2.25) {$\$$};      
	\node at (2.7,2.8) {$d$};
	\node at (.9,.1) [left] {$=\displaystyle\sum_{g\in S_d}$};
	\end{scope}
	\begin{scope}[shift={(4,-8)}]
	\draw (0,0) rectangle (1,.6);
	\draw (1.5,0) rectangle (2.5,.6);
	\node at (-.1,.3) {$\$$};
	\node at (1.4,.3) {$\$$};
	\node at (.5,.3) {$Y_i$};
	\node at (2,.3) {$Y_i$};
	\draw [ultra thick] (2,.6)--(2,1.3) arc[radius=.2, start angle=0, end angle=90]--(.7,1.5) arc[radius=.2, start angle=90, end angle=180]--(.5,.6);
	\draw [ultra thick] (1.25,1.5)--(1.25,2.1);
	\draw [ultra thick] (.5,3)--(.5,2.3) arc[radius=.2, start angle=180, end angle=270]--(1.8,2.1) arc[radius=.2, start angle=270, end angle=360]--(2,3);
	\draw [fill=white](1.75,.8) rectangle (2.25,1.3);
	\node at (2,1.05) {$g$};
	\node at (2.35,1.05) {$\$$};
	\draw [fill=white](1.75,2.3) rectangle (2.25,2.8);
	\node at (2,2.55) {$g$};
	\node at (1.65,2.55) {$\$$};
	\node at (0,.1) [left] {$=\displaystyle\sum_{g\in S_d}$};
	\end{scope}
	\begin{scope}[shift={(9,-8)}]
	\draw (0,0) rectangle (1,.6);
	\node at (-.1,.3) {$\$$};
	\node at (.5,.3) {$Y_i$};
	\draw [ultra thick] (.5,.6)--(.5,1);
	\draw [ultra thick] (0,2.5)--(0,1.2) arc[radius=.2, start angle=180, end angle=270]--(.8,1) arc[radius=.2, start angle=270, end angle=360]--(1,2.5);
	\draw [fill=white] (.5,1) circle [radius=.2];
	\node at (.5,1) {$d$};
	\draw [fill=white] (.75,1.5) rectangle (1.25,2);
	\node at (1,1.75) {$g$};
	\node at (.65,1.75) {$\$$};
	\node at (-.1,.1) [left] {$=\displaystyle\sum_{g\in G_i}$};
	\end{scope}
	\end{scope}
	\end{tikzpicture}.
	\end{equation}
	By comparing the $x^*x$ where $x$ is the left- and right-hand side of Equation \eqref{equ: higer for Yi}, we have that
	\begin{align}
	(Y_i^*Y_i)^2&=|G_i| Y_i^*Y_i,\\
	(\Rightarrow)Y_i^*Y_i&=|G_i|.
	\end{align}
	This implies that
	\begin{equation}
	\sum_{i=1}^m |G_i|=\sum_{i=1}^mY_i^*Y_i=Y^*Y=|S_d|.
	\end{equation}
	Let $O_i$ be the orbit of $Y_i$. By the Orbit-Stabilizer Theorem, we have that
	\begin{equation}
	|G_i||O_i|=|S_d|.
	\end{equation}
	By summing over $i$, this implies that
	\begin{equation}
	\sum_{i=1}^m|G_i||O_i|=m|S_d|=m \sum_{i=1}^m |G_i|.
	\end{equation}
	Note that $|O_i|\leq m$ for every $1\leq i\leq m$. This forces that $|O_i|=m$ for every $1\leq i\leq m$. Therefore, for every $2\leq j\leq m$, there exists $g\in S_d$ such that $Y_j=g\cdot Y_1$. Moreover, this implies that all the $G_i$'s are isomorphic and we denote it by $G$.
	
	Now we show that the planar algebra $\mathscr{Q}_\bullet$ is generated by $\{S,W,Y_1\}$. Let $p$ be an arbitrary minimal projection in $\mathscr{Q}_m$ for some $m\in\mathbb{N}$. Note that $\mathscr{P}_m^{S_d}\subset\mathscr{Q}_m$. There exists a projection in $\mathscr{P}_{m}^{S_d}$ such that $p$ is a subprojection of $S$. By Universal skein theory for group-action models \cite{Ren19Uni}, there exist $T\in\mathscr{Q}_{d+m}$ such that 
	\begin{equation}\label{equ: decom of P}
	\begin{tikzpicture}
	\draw (0,0) rectangle (1,.6);
	\node at (.5,.3) {$Y$};
	\node at (-.1,.3) {$\$$};
	\draw [ultra thick](.5,.6)--(.5,1.8);
	\draw [fill=white] (0,1) rectangle (1,1.4);
	\node at (.5,1.2) {$T$};
	\node at (-.1,1.2) {$\$$};
	\node at (.35,.8) {$d$};
	\node at (.35,1.6) {$m$};
	\node at (-.4,.3)[left] {$P=$};
	\begin{scope}[shift={(2.6,0)}]
	\draw (0,0) rectangle (1,.6);
	\node at (.5,.3) {$Y_i$};
	\node at (-.1,.3) {$\$$};
	\draw [ultra thick](.5,.6)--(.5,1.8);
	\draw [fill=white] (0,1) rectangle (1,1.4);
	\node at (.5,1.2) {$T$};
	\node at (-.1,1.2) {$\$$};
	\node at (.35,.8) {$d$};
	\node at (.35,1.6) {$m$};
	\node at (-.3,.3)[left] {$=\displaystyle\sum_{i=1}^m$};
	\end{scope}
	\end{tikzpicture}.
	\end{equation}
	Let $P_i$ be the $i$-th term in the right hand side of Equation \eqref{equ: decom of P} for $1\leq i\leq m$. By Lemma \ref{lem: minimality}, we know that each $P_i$ is a minimal projection. Since $p$ is also a minimal projection, there must exist $1\leq i\leq m$ such that $p=P_i$. Note that for every $1\leq i\leq m$, there exists $g\in S_d$ such that $Y_i=g\cdot Y_1$. This implies that $p$ is generated by $\{S,W,Y_1\}$. Moreover, the generators satisfy the universal skein theory for group-action models. Therefore, we have that $\mathscr{Q}_\bullet$ is isomorphic to $\mathscr{P}_\bullet^G$. 
\end{proof}

\bibliography{bibliography}
\bibliographystyle{amsalpha}

\end{document}